\newtheorem{theorem}{Theorem}[section]
\newtheorem{lemma}{Lemma}[section]
\newtheorem{remark}{Remark}[section]
\newtheorem{example}{Example}[section]
\numberwithin{equation}{section}
  \newtheorem{thqt}{Theorem}
\begin{document}
\keywords{Spherical Aluthge transform;
joint numerical radius; joint spectral radius;
joint operator norm.
}

\subjclass[2010]{Primary 47A13. Secondary 47A12, 47A30.}

\title[Joint numerical radius of spherical Aluthge transforms]
{Joint numerical radius of spherical Aluthge transforms of tuples of Hilbert space operators}

\author[K. Feki]{Kais Feki}
\address{University of Sfax, Sfax, Tunisia.}
\email{kais.feki@hotmail.com}

\author[T. Yamazaki]{Takeaki Yamazaki}
\address{Toyo University, Saitama, Japan.}
\email{t-yamazaki@toyo.jp}

\begin{abstract}
Let $\mathbf{T}=(T_1,\ldots,T_d)$ be a $d$-tuple of operators on a complex Hilbert space $\mathcal{H}$. The spherical Aluthge transform of $\mathbf{T}$ is the $d$-tuple given by $\widehat{\mathbf{T}}:=(\sqrt{P}V_1\sqrt{P},\ldots,\sqrt{P}V_d\sqrt{P})$ where $P:=\sqrt{T_1^*T_1+\ldots+T_d^*T_d}$ and $(V_1,\ldots,V_d)$ is a joint partial isometry such that $T_k=V_k P$ for all $1 \le k \le d$. In this paper, we prove several inequalities involving the joint numerical radius and the joint operator norm of $\widehat{\mathbf{T}}$. Moreover, a characterization of the joint spectral radius of an operator tuple $\mathbf{T}$ via $n$-th iterated of spherical Aluthge transform is established.
\end{abstract}
\maketitle

\section{Introduction and Preliminaries}\label{s1}
Throughout this paper, $\mathcal{H}$ will be a complex Hilbert space, with the inner product $\langle\cdot\mid \cdot\rangle$ and the norm $\|\cdot\|$.  $\mathcal{B}(\mathcal{H})$ stands for the Banach algebra of all bounded linear operators on $\mathcal{H}$ and $I$ denotes the identity operator on $\mathcal{H}$. In all that follows, by an operator we mean a bounded linear operator. The range and the null space of an operator $T$ are denoted by ${\mathcal R}(T)$ and ${\mathcal N}(T)$, respectively. Also, $T^*$ will be denoted to be the adjoint of $T$. An operator $T$ is called positive if $\langle Tx\mid x\rangle\geq0$ for all $x\in{\mathcal H }$, and we then write $T\geq 0$. Further, the square root of every positive operator $T$ is denoted by
$T^{\frac{1}{2}}$. If $T\geq 0$, then the absolute value of $T$ is denoted by $|T|$ and given by $|T|=(T^*T)^{\frac{1}{2}}$.

For $T \in \mathcal{B}(\mathcal{H})$, the spectral radius of $T$ is defined by
\[r(T)=\sup \left\{ |\lambda|\,;\; \lambda \in \sigma(T)\right \},\]
where $\sigma(T)$ denotes the spectrum  of $T$. Moreover, the numerical radius and operator norm of $T$ are denoted by $\omega \left( T \right)$ and $\left\| T \right\|$ respectively and they are given by
\begin{equation*}
\omega \left( T \right)=\sup \left\{ \left| \left\langle Tx\mid x \right\rangle  \right|\,;\;\text{ }x\in \mathcal{H},\left\| x \right\|=1 \right\}
\end{equation*}
and
\begin{equation*}
\left\| T \right\|=\sup \left\{ \left\| Tx \right\|\,;\text{ }x\in \mathcal{H},\left\| x \right\|=1 \right\}.
\end{equation*}
It is well-known that for $T \in \mathcal{B}(\mathcal{H})$ we have
\begin{align}\label{1.1}
\frac{\|T\|}{2} \leq \max \left \{r(T), \frac{\|T\|}{2} \right\}\leq \omega(T)\leq \|T\|.
\end{align}
It has been shown in \cite{Y2007} that if $T\in\mathcal{B}(\mathcal{H})$, then
\begin{align}\label{zamnum}
\omega(T) = \displaystyle{\sup_{\theta \in \mathbb{R}}}{\left\|\Re(e^{i\theta}T)\right\|},
\end{align}
where $\Re(X):=\frac{X+X^{*}}{2}$ for a given operator $X$. For more results, we refer the reader to the book by Gustafson and Rao \cite{gusta}.

An operator $U\in \mathcal{B}(\mathcal{H})$ is said to be a partial isometry $\|Ux\|=\|x\|$ for every $x\in \mathcal{N}(A)^\perp$. Let $T=U|T|$ be the polar decomposition of $T\in \mathcal{B}(\mathcal{H})$ with $U$ is a partial isometry. The Aluthge transform of $T$ was first defined
in \cite{Alu} by $\widetilde{T}:=|T|^{\frac{1}{2}}U|T|^{\frac{1}{2}}$. This transformation has attracted considerable attention over the last two decades (see, for example, \cite{Ando,CJL,DySc,JKP,JKP2,LLY,Yam}).  The following properties of $\widetilde{T}$ are well-known (see \cite{JKP}):
\begin{enumerate}
  \item [(i)] $\|\widetilde{T}\| \leq \|T\|$,
  \item [(ii)] $r(\widetilde{T})= r(T)$,
  \item [(iii)] $\omega(\widetilde{T}) \leq \omega(T)$.
\end{enumerate}

Let $\mathbf{T} = (T_1,\ldots,T_d)\in \mathcal{B}(\mathcal{H })^d$ be a $d$-tuple of operators. The joint numerical range of $\mathbf{T}$ is introduced by A.T. Dash \cite{dach} as:
$$JtW(\mathbf{T})=\left\{(\langle T_1 x\mid x\rangle,\ldots,\langle T_d x \mid x\rangle)\,;\,x \in \mathcal{H},\;\|x\|=1\right\}.$$
If $d=1$, we get the definition of the classical numerical range of an operator $T$, denoted by $W(T)$, which is firstly introduced by Toeplitz in \cite{t1}. It is well-known that $W(T)$ is convex (see \cite{li,gus}). Unlike the classical numerical range, $JtW(\mathbf{T})$ may be non convex for $d\geq2$. For a survey of results concerning the convexity of $JtW(\mathbf{T})$, the reader may see \cite{dach,lipoon2} and their references. The joint numerical radius of an operator tuple $\mathbf{T}=(T_1,\ldots,T_d)$ is defined in \cite{cmt} as
\begin{align*}
\omega(\mathbf{T})
&=\displaystyle\sup\left\{\|\lambda\|_2\,; \lambda=(\lambda_1,\ldots,\lambda_d)\in
JtW ( \mathbf{T} ) \right\}\\
&=\displaystyle\sup\left\{\left(\displaystyle\sum_{k=1}^d|\langle T_kx\mid x\rangle|^2\right)^{\frac{1}{2}};\;x\in \mathcal{H},\;\|x\|=1\right\}.
\end{align*}
It was shown in \cite{bakfeki02} that for an operator tuple $\mathbf{T}=(T_1,\ldots,T_d)\in \mathcal{B}(\mathcal{H})^d$, we have
\begin{equation}\label{newnumrad}
\omega(\mathbf{T})= \displaystyle\sup_{(\lambda_1,\ldots,\lambda_d)\in \overline{\mathbb{B}}_d}\omega(\lambda_1T_1+\ldots+\lambda_dT_d),
\end{equation}
where $\mathbb{B}_d$ denotes the
open unit ball in $\mathbb{C}^d$ with respect to the euclidean norm, and
$\overline{\mathbb{B}}_d$ is its closure
i.e.
$$\overline{\mathbb{B}}_d:=\left\{\lambda=(\lambda_1,\ldots,\lambda_d)\in \mathbb{C}^d\,;\;\|\lambda\|_2^2:=\sum_{k=1}^d|\lambda_k|^2\leq1 \right\} .$$

Given a $d$-tuple $\mathbf{T}= (T_1,\ldots,T_d)$ of operators on $\mathcal{H}$, the joint norm of $\mathbf{T}$ is defined as
$$\|\mathbf{T}\|:=\displaystyle\sup\left\{\left(\displaystyle\sum_{k=1}^d\|T_kx\|^2\right)^{\frac{1}{2}};\;x\in \mathcal{H},\;\|x\|=1\right\}.$$ 
Notice that $\|\cdot\|$ and $\omega(\cdot)$ are equivalent norms on $\mathcal{B}(\mathcal{H})^d$. More precisely, for every $\mathbf{T}= (T_1,\ldots,T_d)\in \mathcal{B}(\mathcal{H})^d$ we have
\begin{equation}\label{eqcor2.3}
\frac{1}{2\sqrt{d}}\|\mathbf{T}\|\leq\omega(\mathbf{T})\leq \|\mathbf{T}\|.
\end{equation}
Moreover, the inequalities in \eqref{eqcor2.3} are sharp (see \cite{bakfeki01,popescu01}).

Let $\mathbf{T}=(T_1,\ldots,T_d)\in \mathcal{B}(\mathcal{H})^d$ be a $d$-tuple of operators, and consider $S=\begin{pmatrix}
T_{1} \\
\vdots \\
T_{d}%
\end{pmatrix}$ as an operator from $\mathcal{H}$ into $\mathbb{H}:=\oplus_{i=1}^d\mathcal{H}$, that is,
\begin{align}
S=\begin{pmatrix}
T_{1} \\
\vdots \\
T_{d}%
\end{pmatrix} \colon \mathcal{H} & \rightarrow\mathbb{H},\;x\mapsto
{}^{t}(T_1x,\ldots,T_dx).  \label{setting 1}
\end{align}%
Then, we have $S^{\ast }S=(T_{1}^{\ast },\ldots,T_{d}^{\ast})
\begin{pmatrix}
T_{1} \\
\vdots \\
T_{d}%
\end{pmatrix}=\displaystyle\sum_{k=1}^dT_{k}^{\ast }T_{k}$. Since $S$ is an operator from $\mathcal{H}$ into $\mathbb{H}$, then $S$ has a classical polar decomposition $S= V P$, that is,
\begin{equation*}
\begin{pmatrix}
T_{1} \\
\vdots \\
T_{d}%
\end{pmatrix}=\begin{pmatrix}
V_{1} \\
\vdots \\
V_{d}%
\end{pmatrix}P=\begin{pmatrix}
V_{1}P \\
\vdots \\
V_{d}P%
\end{pmatrix}, \label{setting 2}
\end{equation*}%
where $V=\begin{pmatrix}
V_{1} \\
\vdots \\
V_{d}%
\end{pmatrix}$ is a partial isometry from $\mathcal{H}$ to $\mathbb{H}$ and $P$ is the positive operator on $\mathcal{H}$ given by
$$P=(S^*S)^{\frac{1}{2}}=\sqrt{T_{1}^{\ast }T_{1}+\ldots+T_{d}^{\ast }T_{d}}.$$
So $%
R:=V^*V=(V_{1}^{\ast },\ldots, V_{d}^{\ast })\begin{pmatrix}
V_{1} \\
\vdots \\
V_{d}%
\end{pmatrix}=\displaystyle\sum_{k=1}^dV_{k}^{\ast }V_{k}$
is the orthogonal projection onto the initial space of $V$ which is
\begin{equation}
\bigcap_{i=1}^d \mathcal{N}(T_{i})
=\mathcal{N}(S)
=\mathcal{N}(P) =\bigcap_{i=1}^d \mathcal{N}(V_{i}).
\label{eq:kernel condition}
\end{equation}%

For $\mathbf{T}=(T_{1},\ldots,T_{d})\in\mathcal{B}(\mathcal{H})^d$, the spherical Aluthge transform of $\mathbf{T}$ is defined as
\begin{equation*}
\widehat{\mathbf{T}}=(\widehat{T_{1}},\ldots,\widehat{T_{d}}):=\left( \sqrt{P}V_{1}\sqrt{P},\ldots,\sqrt{P}V_{d}\sqrt{P}\right) \text{ (cf. \cite{CuYo7}, \cite{CuYo6}, \cite{KiYo1}).}
\label{Def-Alu1}
\end{equation*}
This transformation has been recently investigated by C. Benhida et al. in \cite{benhida}. It should be mention here that $\widehat{T_{i}}=\sqrt{P}V_{i}\sqrt{P}$ is not the Aluthge transform of $T_i$ (for $i\in\{1,\ldots,d\}$). 
When the operators $T_k$ are pairwise commuting, we say that $\mathbf{T}$ is a commuting $d$-tuple.


Let $\mathbf{T}= (T_1,\ldots,T_d)\in \mathcal{B}(\mathcal{H})^d$ be a commuting $d$-tuple of operators.
There are several different notions of a spectrum. For a good description, the reader is referred to \cite{curto} and the references there in. There is a well-known notion of a joint spectrum of a commuting $d$-tuple $\mathbf{T}$ called the Taylor joint spectrum denoted by $\sigma_T(\mathbf{T})$ (see \cite{tay}).
It is shown in \cite{benhida} that
$\sigma_{T}(\widetilde{\mathbf{T}})=
\sigma_{T}(\mathbf{T})$ for commuting
$\mathbf{T}\in \mathcal{B(H)}^{d}$.
The joint spectral radius of $\mathbf{T}$ is defined to be the number
\begin{equation*}
r(\mathbf{T})=\sup\{\|\lambda\|_2\,;\lambda=(\lambda_1,\ldots,\lambda_d) \in \sigma_T(\mathbf{T})\}.
 \end{equation*}
It should be mention here that Ch\=o and \`Zelazko proved in \cite{cz} that this definition of $r(\mathbf{T})$ is independent of the choice of the joint spectrum of $\mathbf{T}$. Furthermore, an analogue of the Gelfand-Beurling spectral radius formula for single operators has been established by M\"uller and Soltysiak in \cite{mus} for commuting tuples.
Let $\mathbf{T}=(T_{1},\ldots,T_{m})\in
\mathcal{B}(\mathcal{H})^{m}$ and
$\mathbf{S}=(S_{1},\ldots,S_{n})\in \mathcal{B}(\mathcal{H})^{n}$.
Then the product $\mathbf{T}\mathbf{S}$ is
defined by
$$ \mathbf{T}\mathbf{S}=
(T_{1}S_{1},\ldots,T_{1}S_{n}, T_{2}S_{1},\ldots,T_{2}S_{n},\ldots,
T_{m}S_{1},\ldots,T_{m}S_{n})\in \mathcal{B}(\mathcal{H})^{mn}. $$
Especially, $\mathbf{T}^{2}=\mathbf{T}\mathbf{T}$
and $ \mathbf{T}^{n+1}=\mathbf{T}\mathbf{T}^{n}$.
It was shown in \cite{mus} (cf. \cite{bun}) that
if $\mathbf{T}$ is commuting, then
the joint spectral radius of $\mathbf{T}$ is given by
\begin{equation}\label{jointradii002}
r(\mathbf{T})=\lim_{n\to \infty}
\| \mathbf{T}^{n}\|^{\frac{1}{n}}.
\end{equation}
%
In this paper, we shall show several inequalities for
spherical Aluthge transform which are known in the
single operator case in Sections \ref{s2} and \ref{s3}.
Then, in Section \ref{s4} we shall show a characterization of
joint spectral radius via $n$-th iterated of
spherical Aluthge transform.
It is an extension of the formula
$\displaystyle \lim_{n\to \infty}\|\widetilde{T}_{n}\|
=r(T)$, where $\widetilde{T}_{n}$ means
the $n$-th iterated of Aluthge transform
of a single operator shown in \cite{Yam}.

\section{Basic inequalities}\label{s2}
In this section, we present basic inequalities
for spherical Aluthge transform.

\begin{theorem}\label{thm:norm decreasing}
Let $\mathbf{T}=(T_{1},\ldots,T_{d})\in \mathcal{B}(\mathcal{H})^{d}$. Then,
$$\|\widehat{\mathbf{T}}\|\leq \|\mathbf{T}\|.$$
\end{theorem}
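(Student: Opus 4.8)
The plan is to reformulate both joint norms as ordinary operator norms of column operators from $\mathcal{H}$ into $\mathbb{H}=\oplus_{i=1}^d\mathcal{H}$, and then exploit submultiplicativity of the operator norm together with the fact that $V$ is a partial isometry. Recall the operator $S$ from \eqref{setting 1}; by definition of the joint norm, $\|Sx\|^2=\sum_{k=1}^d\|T_kx\|^2$, so that $\|\mathbf{T}\|=\|S\|$. Since $S^*S=P^2$, the C*-identity $\|S\|^2=\|S^*S\|$ gives $\|\mathbf{T}\|=\|S\|=\|P\|$, and consequently $\|\sqrt{P}\|=\|P\|^{1/2}=\|\mathbf{T}\|^{1/2}$.

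Next I would introduce the column operator associated with $\widehat{\mathbf{T}}$, namely
$$\widehat{S}:=\begin{pmatrix}\sqrt{P}V_1\sqrt{P}\\ \vdots \\ \sqrt{P}V_d\sqrt{P}\end{pmatrix}\colon \mathcal{H}\to\mathbb{H},$$
so that, exactly as for $S$, one has $\|\widehat{\mathbf{T}}\|=\|\widehat{S}\|$. The key observation is that $\widehat{S}$ factors as $\widehat{S}=Q\,V\,\sqrt{P}$, where $V={}^{t}(V_1,\ldots,V_d)$ is the joint partial isometry arising in the polar decomposition $S=VP$, and $Q:=\mathrm{diag}(\sqrt{P},\ldots,\sqrt{P})$ is the block-diagonal positive operator on $\mathbb{H}$. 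Indeed, applying $V$ to $\sqrt{P}x$ produces the column ${}^{t}(V_1\sqrt{P}x,\ldots,V_d\sqrt{P}x)$, and applying $Q$ then multiplies each entry by $\sqrt{P}$, yielding $\widehat{S}x$.

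Finally I would estimate. Being block-diagonal with all diagonal entries equal to $\sqrt{P}$, the operator $Q$ satisfies $\|Q\|=\|\sqrt{P}\|=\|\mathbf{T}\|^{1/2}$, while $V$ is a partial isometry and hence $\|V\|\le 1$. Submultiplicativity then yields
$$\|\widehat{\mathbf{T}}\|=\|\widehat{S}\|=\|Q\,V\,\sqrt{P}\|\le \|Q\|\,\|V\|\,\|\sqrt{P}\|\le \|\mathbf{T}\|^{1/2}\cdot 1\cdot\|\mathbf{T}\|^{1/2}=\|\mathbf{T}\|,$$
which is the desired inequality, in parallel with property (i) of the single-operator Aluthge transform. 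The argument is essentially bookkeeping: the only points requiring a moment of care are the factorization $\widehat{S}=Q\,V\,\sqrt{P}$ and the evaluation $\|Q\|=\|\sqrt{P}\|$ of the block-diagonal operator, both of which are routine, so I expect no genuine analytic obstacle here.
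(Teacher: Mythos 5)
Your proof is correct. Each step checks out: $\|\mathbf{T}\|=\|S\|=\|P\|$ via the C*-identity, the factorization $\widehat{S}=Q\,V\,\sqrt{P}$ is verified entrywise, $\|Q\|=\|\sqrt{P}\|=\|P\|^{1/2}$ for the block-diagonal operator, and $\|V\|\le 1$ because $V^*V=\sum_k V_k^*V_k$ is an orthogonal projection. The route is, however, genuinely different from the paper's. The paper first reduces $\|\widehat{\mathbf{T}}\|^2$ to the scalar quantity $\bigl\|\sum_k P^{1/2}V_k^*PV_kP^{1/2}\bigr\|$ via Lemma \ref{lem:norm of tuple of operators}, and then applies Lemma \ref{lem:Lemma 2} (the inequality $\|\sum_k X_k^*AX_k\|\le\|\sum_k X_k^*X_k\|\,\|A\|$, itself proved by a block-matrix factorization) with $X_k=V_kP^{1/2}$ and $A=P$, together with the finer fact that $\sum_k V_k^*V_k$ is the projection onto $\overline{\mathcal{R}(P)}$, which forces $\bigl\|\sum_k P^{1/2}V_k^*V_kP^{1/2}\bigr\|=\|P\|$. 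Your direct factorization of the column operator absorbs all of that into a single use of submultiplicativity and needs only $\|V\|\le 1$ rather than the identification of the initial space of $V$; it is shorter and more self-contained for this theorem. What the paper's detour buys is reusability: Lemma \ref{lem:Lemma 2} and the projection identity are invoked again in Section \ref{s4} (in Lemmas \ref{lem:Lemma 3}, \ref{lem:Lemma 4} and \ref{lem:Lemma 5}), where the analogous estimates for $\|\widehat{\mathbf{T}}^{k}\|$ do not reduce to a single three-factor product, so the authors set up the machinery once and amortize it.
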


In order to prove our first result, we need the following lemmas.
\begin{lemma}\label{lem:norm of tuple of operators}
Let $\mathbf{T}=(T_{1},\ldots, T_{d})\in \mathcal{B}(\mathcal{H})^{d}$. Then
$$ \|\mathbf{T}\|=\left\|\sum_{k=1}^{d}T_{k}^{*}T_{k}\right\|^{\frac{1}{2}}.$$
%
\end{lemma}

\begin{proof}
Since $\sum_{k=1}^{d}T^{*}_{k}T_{k}\geq 0$, then it follows that
\begin{align*}
\|\mathbf{T}\|^2
& =
\sup_{\|x\|=1}\sum_{k=1}^{d}\|T_{k}x\|^{2}
 =
\sup_{\|x\|=1}\langle \sum_{k=1}^{d}T^{*}_{k}T_{k}x\mid x\rangle
 =
\left\|\sum_{k=1}^{d}T^{*}_{k}T_{k}\right\|.
\end{align*}
%
%
\end{proof}

\begin{lemma}\label{lem:Lemma 2}
Let $A, X_{k}\in \mathcal{B}(\mathcal{H})$ for $k=1,2,\ldots,d$.
Then
$$ \left\| \sum_{k=1}^{d}X_{k}^{*}AX_{k}\right\|
\leq
\left\| \sum_{k=1}^{d}X_{k}^{*}X_{k}\right\| \|A\|. $$
\end{lemma}

\begin{proof}
It can be seen that
\begin{align*}
\left\| \sum_{k=1}^{d}X_{k}^{*}AX_{k}\right\|
 & =\left\| \begin{pmatrix}
X_{1}^{*} & \cdots & X_{d}^{*} \\
0 & \cdots & 0 \\
\vdots & & \vdots \\
0 & \cdots & 0
\end{pmatrix}
\begin{pmatrix}
A &   \\
 & \ddots  \\
 &  & A
\end{pmatrix}
\begin{pmatrix}
X_{1} & 0 & \cdots & 0 \\
\vdots & \vdots &  & \vdots \\
X_{d} & 0 & \cdots & 0
\end{pmatrix}\right\| \\
& \leq
\left\|\begin{pmatrix}
A &   \\
 & \ddots  \\
 &  & A
\end{pmatrix}\right\|\left\| \begin{pmatrix}
X_{1} & 0 & \cdots & 0 \\
\vdots & \vdots &  & \vdots \\
X_{d} & 0 & \cdots & 0
\end{pmatrix}\right\|^{2}
\\
&=\|A\|
\left\|
\begin{pmatrix}
X_{1}^{*} & \cdots & X_{d}^{*} \\
0 & \cdots & 0 \\
\vdots & & \vdots \\
0 & \cdots & 0
\end{pmatrix}
\begin{pmatrix}
X_{1} & 0 & \cdots & 0 \\
\vdots & \vdots &  & \vdots \\
X_{d} & 0 & \cdots & 0
\end{pmatrix}\right\|\\
& =\|A\|\left\| \sum_{k=1}^{d}X_{k}^{*}X_{k}\right\|.
\end{align*}
This proves the desired inequality.
\end{proof}

\begin{proof}
[Proof of Theorem \ref{thm:norm decreasing}]
First of all, we notice that
$$ \|\mathbf{T}\|^{2}=\left\| \sum_{k=1}^{d}T_{k}^{*}T_{k}\right\|=
\|P\|^{2} $$
by Lemma \ref{lem:norm of tuple of operators}.
Then by using Lemma \ref{lem:Lemma 2}, we have
\begin{align*}
\|\widehat{\mathbf{T}}\|^{2} & =
\left\| \sum_{k=1}^{d} \widehat{T}_{k}^{*}
\widehat{T}_{k}
\right\| \\
& =
\left\| \sum_{k=1}^{d} P^{\frac{1}{2}}V_{k}^{*}
PV_{k}P^{\frac{1}{2}}\right\| \\
& \leq
\|P\|
\left\| \sum_{k=1}^{d} P^{\frac{1}{2}}V_{k}^{*}
V_{k}P^{\frac{1}{2}}\right\|
 =
\|P\|\cdot \|P\| =\|\mathbf{T}\|^{2},
\end{align*}
where the third equation follows from the fact that $\sum_{k=1}^{d}V_{k}^{*}V_{k}$ is a projection onto $\overline{\mathcal{R}(P)}$.
%
\end{proof}

Next, we shall show inequalities of joint numerical radius for spherical Aluthge transform. This discussion will be divided into two parts. We treat non-commuting tuples of operators in the first part.

\begin{theorem}
\label{thm:first step on numerical radius}
Let $\mathbf{T}=(T_{1},\ldots,T_{d})\in \mathcal{B}(\mathcal{H})^{d}$. Then,
$$\omega(\widehat{\mathbf{T}})\leq
\frac{1}{2}\omega(\mathbf{T})+\frac{1}{2}
\omega(\mathbf{T}_{1}),$$
where $\mathbf{T}_{1}:=(PV_{1},\ldots, PV_{d})$.
\end{theorem}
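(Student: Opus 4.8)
The plan is to reduce the tuple inequality to a single-operator estimate by means of the variational formula \eqref{newnumrad}, and then to establish the latter through the real-part formula \eqref{zamnum} together with a norm inequality for symmetrized products. Write $A_{\lambda}:=\sum_{k=1}^{d}\lambda_{k}V_{k}$ for $\lambda=(\lambda_{1},\ldots,\lambda_{d})\in\overline{\mathbb{B}}_{d}$. First I would record the three identities $\sum_{k}\lambda_{k}\widehat{T}_{k}=\sqrt{P}A_{\lambda}\sqrt{P}$, $\sum_{k}\lambda_{k}T_{k}=A_{\lambda}P$ and $\sum_{k}\lambda_{k}PV_{k}=PA_{\lambda}$, which follow at once from $T_{k}=V_{k}P$. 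By \eqref{newnumrad} these yield $\omega(\widehat{\mathbf{T}})=\sup_{\lambda}\omega(\sqrt{P}A_{\lambda}\sqrt{P})$, $\omega(\mathbf{T})=\sup_{\lambda}\omega(A_{\lambda}P)$ and $\omega(\mathbf{T}_{1})=\sup_{\lambda}\omega(PA_{\lambda})$. Hence it suffices to prove, for an arbitrary $A\in\mathcal{B}(\mathcal{H})$ and $P\geq 0$, the pointwise estimate $\omega(\sqrt{P}A\sqrt{P})\leq\frac{1}{2}\omega(AP)+\frac{1}{2}\omega(PA)$; taking the supremum over $\lambda$ and using $\sup(f+g)\leq\sup f+\sup g$ then gives the theorem.

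For the pointwise estimate I would invoke \eqref{zamnum}. Since $\sqrt{P}$ is self-adjoint, for every real $\theta$ we have $\Re(e^{i\theta}\sqrt{P}A\sqrt{P})=\sqrt{P}H_{\theta}\sqrt{P}$, where $H_{\theta}:=\Re(e^{i\theta}A)$ is self-adjoint, so that $\omega(\sqrt{P}A\sqrt{P})=\sup_{\theta}\|\sqrt{P}H_{\theta}\sqrt{P}\|$. The crucial algebraic observation is that the commutators cancel in the symmetrized product, giving the identity $PH_{\theta}+H_{\theta}P=\Re(e^{i\theta}AP)+\Re(e^{i\theta}PA)$ (a direct expansion, using $(AP)^{*}=PA^{*}$ and $(PA)^{*}=A^{*}P$). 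Combined with the key norm inequality stated below and the triangle inequality, this produces $\|\sqrt{P}H_{\theta}\sqrt{P}\|\leq\frac{1}{2}\|PH_{\theta}+H_{\theta}P\|\leq\frac{1}{2}\|\Re(e^{i\theta}AP)\|+\frac{1}{2}\|\Re(e^{i\theta}PA)\|$; taking the supremum over $\theta$ and appealing to \eqref{zamnum} once more for $AP$ and $PA$ yields the desired pointwise estimate.

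The heart of the matter, and the step I expect to be the main obstacle, is the norm inequality $\|\sqrt{P}X\sqrt{P}\|\leq\frac{1}{2}\|PX+XP\|$ for self-adjoint $X$ and $P\geq 0$. My plan for it is spectral and avoids any simultaneous-diagonalization heuristic. Since $\sqrt{P}X\sqrt{P}$ is self-adjoint, its norm equals its spectral radius, and the relation $r(YZ)=r(ZY)$ applied with $Y=\sqrt{P}$, $Z=X\sqrt{P}$ gives $\|\sqrt{P}X\sqrt{P}\|=r(\sqrt{P}X\sqrt{P})=r(PX)$. The same similarity shows $\sigma(PX)\subset\mathbb{R}$, because $\sigma(\sqrt{P}X\sqrt{P})\subset\mathbb{R}$ and $\sigma(PX)$ and $\sigma(\sqrt{P}X\sqrt{P})$ agree off $0$. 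Finally, for any operator $B$ with $\sigma(B)\subset\mathbb{R}$ one has $r(B)\leq\|\Re(B)\|$: since $\sigma(B)$ has empty interior in $\mathbb{C}$, every spectral point lies in the approximate point spectrum, so a real $\mu\in\sigma(B)$ with $|\mu|=r(B)$ admits unit vectors $z_{n}$ with $\|(B-\mu)z_{n}\|\to 0$, whence $\langle\Re(B)z_{n}\mid z_{n}\rangle=\Re\langle Bz_{n}\mid z_{n}\rangle\to\mu$ and therefore $\|\Re(B)\|\geq|\mu|=r(B)$. Applying this with $B=PX$ and noting $\Re(PX)=\frac{1}{2}(PX+XP)$ closes the argument. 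The delicate points to verify carefully are precisely that the maximizing spectral point is real and lies in the approximate point spectrum; both are guaranteed here by the structural hypotheses $P\geq 0$ and $X=X^{*}$, which force $\sigma(PX)$ to be real.
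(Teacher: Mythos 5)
Your proposal is correct, and while it begins exactly as the paper does --- using \eqref{newnumrad} to reduce everything to the single-operator inequality $\omega(P^{1/2}U_{\lambda}P^{1/2})\leq\tfrac{1}{2}\omega(U_{\lambda}P)+\tfrac{1}{2}\omega(PU_{\lambda})$ --- your proof of that core inequality is genuinely different. The paper proves the numerical-range inclusion $\overline{W(P^{1/2}U_{\lambda}P^{1/2})}\subseteq\overline{W\bigl(\tfrac{1}{2}(U_{\lambda}P+PU_{\lambda})\bigr)}$ by combining Hildebrandt's characterization of $\overline{W(\cdot)}$ as an intersection of disks (Theorem~\ref{thmqt: characterization of numerical range}) with the Corach--Porta--Recht inequality $2\|X\|\leq\|AXA^{-1}+A^{-1}XA\|$ (Theorem~\ref{thmqt: Heinz inequality}), applied to the regularization $P_{\varepsilon}=P+\varepsilon I$ and then letting $\varepsilon\searrow 0$. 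You instead pass through \eqref{zamnum}, which reduces matters to the self-adjoint case $\|\sqrt{P}H_{\theta}\sqrt{P}\|\leq\tfrac{1}{2}\|PH_{\theta}+H_{\theta}P\|$, and you prove this by a spectral argument: $\|\sqrt{P}X\sqrt{P}\|=r(PX)$ via $\sigma(YZ)\setminus\{0\}=\sigma(ZY)\setminus\{0\}$, reality of $\sigma(PX)$, and the fact that $r(B)\leq\|\Re(B)\|$ whenever $\sigma(B)\subseteq\mathbb{R}$ (every such spectral point being an approximate eigenvalue). All of these steps check out, including the algebraic identity $PH_{\theta}+H_{\theta}P=\Re(e^{i\theta}U_{\lambda}P)+\Re(e^{i\theta}PU_{\lambda})$. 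What your route buys is self-containedness: no invertibility regularization, no appeal to the CPR inequality or to Hildebrandt's theorem, only standard spectral facts --- made possible because \eqref{zamnum} lets you restrict the key norm inequality to self-adjoint $X$, where norm equals spectral radius. What the paper's route buys is that the CPR inequality holds for arbitrary $X$, so the shifted operators $P^{1/2}U_{\lambda}P^{1/2}-\mu I$ can be handled directly and one actually obtains the stronger numerical-range inclusion, not merely the radius inequality.
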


To prove the result, we will use the following theorems.
\begin{thqt}[\cite{H1965, SW1968}]
\label{thmqt: characterization of numerical range}
Let $T\in \mathcal{B}(\mathcal{H})$. Then
$$ \overline{W(T)}=\bigcap_{\mu\in \mathbb{C}}
\{\lambda\in \mathbb{C}\; ;\,
|\lambda-\mu|\leq \|T-\mu I\|\}.$$
\end{thqt}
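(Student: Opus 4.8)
The plan is to prove the set equality by establishing the two inclusions separately. Write $K:=\bigcap_{\mu\in\mathbb{C}}\{\lambda\in\mathbb{C}\,;\,|\lambda-\mu|\le\|T-\mu I\|\}$ for the right-hand side, so the goal is $\overline{W(T)}=K$. The inclusion $\overline{W(T)}\subseteq K$ is immediate from Cauchy--Schwarz: for any unit vector $x$ and any $\mu\in\mathbb{C}$,
$$|\langle Tx\mid x\rangle-\mu|=|\langle(T-\mu I)x\mid x\rangle|\le\|(T-\mu I)x\|\le\|T-\mu I\|,$$
so every element of $W(T)$ lies in the closed disk of centre $\mu$ and radius $\|T-\mu I\|$. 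Since this holds for all $\mu$, we get $W(T)\subseteq K$, and as $K$ is an intersection of closed disks it is closed, whence $\overline{W(T)}\subseteq K$.

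For the reverse inclusion $K\subseteq\overline{W(T)}$ I would argue by contraposition: given $\lambda_0\notin\overline{W(T)}$, I produce a single $\mu$ with $|\lambda_0-\mu|>\|T-\mu I\|$, which shows $\lambda_0\notin K$. The set $\overline{W(T)}$ is convex by the Toeplitz--Hausdorff theorem and bounded (indeed $W(T)\subseteq\{|\lambda|\le\|T\|\}$), hence compact and convex, so the separating hyperplane theorem yields $\alpha\in\mathbb{R}$ and $\delta>0$ with $\Re\big(e^{-i\alpha}(\lambda-\lambda_0)\big)\ge\delta$ for all $\lambda\in\overline{W(T)}$. Setting $S:=e^{-i\alpha}(T-\lambda_0 I)$, one has $W(S)=e^{-i\alpha}(W(T)-\lambda_0)$, so the separation reads $\Re\langle Sx\mid x\rangle\ge\delta$ for every unit vector $x$, i.e. $\Re(S)\ge\delta I$.

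The key quantitative step is then a norm estimate for $S-tI$ with real $t>0$. For every unit vector $x$,
$$\|(S-tI)x\|^2=\|Sx\|^2-2t\,\Re\langle Sx\mid x\rangle+t^2\le\|S\|^2-2t\delta+t^2,$$
so choosing any $t>\|S\|^2/(2\delta)$ forces the right-hand side strictly below $t^2$, giving $\|S-tI\|<t$. Translating back via $\mu:=\lambda_0+te^{i\alpha}$, we have $S-tI=e^{-i\alpha}(T-\mu I)$, and since multiplication by a unimodular scalar preserves the operator norm, $\|T-\mu I\|=\|S-tI\|<t=|\lambda_0-\mu|$. Thus $\lambda_0\notin K$, and the contrapositive gives $K\subseteq\overline{W(T)}$, completing the proof.

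The substance of the argument — and the only place where genuine work is needed — is this reverse inclusion. What makes it delicate is that the disks defining $K$ are cut out by the \emph{operator norm} $\|T-\mu I\|$ rather than by the smaller numerical radius $\omega(T-\mu I)$; a purely geometric separation of $\lambda_0$ from $\overline{W(T)}$ is therefore not sufficient by itself and must be combined with the explicit estimate above. Boundedness of $W(T)$ is precisely what guarantees that a finite admissible value of $t$, and hence a genuine witnessing disk, exists.
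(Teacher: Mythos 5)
Your proof is correct: the Cauchy--Schwarz direction, and the reverse inclusion via Toeplitz--Hausdorff convexity, strict separation giving $\Re(S)\ge\delta I$ for $S=e^{-i\alpha}(T-\lambda_0 I)$, and the estimate $\|(S-tI)x\|^2\le\|S\|^2-2t\delta+t^2<t^2$ for $t>\|S\|^2/(2\delta)$, all check out, including the translation $\mu=\lambda_0+te^{i\alpha}$. Note that the paper offers no proof of this statement at all --- it is quoted as Theorem~A with citations to Hildebrandt and Stampfli--Williams --- and your argument is essentially the classical one from those sources, so there is nothing in the paper to diverge from.
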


\begin{thqt}[\cite{CPR1990},
{\cite[Theorem 3.12.1]{F2001}}]
\label{thmqt: Heinz inequality}
Let $A$ be a self-adjoint invertible operator and
$X\in \mathcal{B}(\mathcal{H})$. Then
$$2\|X\|\leq \|AXA^{-1}+A^{-1}XA\|.$$
\end{thqt}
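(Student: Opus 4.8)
The plan is to prove this (Corach--Porta--Recht type) inequality in two stages: first reduce to the case of a \emph{positive} invertible $A$, and then realize the map $X\mapsto AXA^{-1}+A^{-1}XA$ as a concrete function of the inner derivation $\operatorname{ad}_{\log A}$, whose inverse I bound in operator norm through a Fourier representation of the hyperbolic secant.

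First I would reduce to the positive case. Writing the self-adjoint invertible operator $A$ as $A=J|A|$, where $|A|=(A^{2})^{1/2}>0$ and $J=\operatorname{sgn}(A)$ is a self-adjoint unitary commuting with $|A|$ (so $J^{2}=I$ and $A^{-1}=J|A|^{-1}=|A|^{-1}J$), a direct computation gives
\[
AXA^{-1}+A^{-1}XA=J\bigl(|A|\,X\,|A|^{-1}+|A|^{-1}X\,|A|\bigr)J .
\]
Since $J$ is unitary, $\|JWJ\|=\|W\|$ for every $W$, so it suffices to prove $2\|X\|\le\|SXS^{-1}+S^{-1}XS\|$ for the positive invertible operator $S:=|A|$.

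Next I would set $H:=\log S$, a bounded self-adjoint operator, and let $D:=\operatorname{ad}_{H}$ be the inner derivation $D(X)=HX-XH$ on the Banach space $(\mathcal{B}(\mathcal{H}),\|\cdot\|)$. Since left and right multiplications commute, $e^{tD}(X)=e^{tH}Xe^{-tH}$; in particular $e^{D}(X)=SXS^{-1}$ and $e^{-D}(X)=S^{-1}XS$, so that
\[
SXS^{-1}+S^{-1}XS=\bigl(e^{D}+e^{-D}\bigr)(X)=2\cosh(D)\,X .
\]
Moreover $D$ is bounded with $\sigma(D)\subseteq\sigma(H)-\sigma(H)\subseteq\mathbb{R}$, and the one-parameter group $e^{itD}(X)=S^{it}XS^{-it}$ consists of \emph{isometries} of $\mathcal{B}(\mathcal{H})$ because $S^{it}$ is unitary. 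Writing $\Phi:=2\cosh(D)$, the inequality to be proved is exactly $\|\Phi^{-1}\|\le\tfrac12$.

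Finally I would estimate $\Phi^{-1}=\tfrac12\operatorname{sech}(D)$ in operator norm. Because $\cosh\ge1$ never vanishes on the real line, $\operatorname{sech}$ is holomorphic on a neighbourhood of $\sigma(D)\subseteq\mathbb{R}$, so $\operatorname{sech}(D)$ is well defined by the holomorphic functional calculus. The key point is the Fourier representation
\[
\operatorname{sech}(x)=\int_{-\infty}^{\infty} g(t)\,e^{itx}\,dt,\qquad g(t)=\tfrac12\operatorname{sech}\!\bigl(\tfrac{\pi t}{2}\bigr),
\]
valid for real $x$, together with $\int_{-\infty}^{\infty}\operatorname{sech}(u)\,du=\pi$, which gives $\|g\|_{1}=\int_{-\infty}^{\infty}|g(t)|\,dt=1$. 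Since $\sigma(D)$ is real, substituting $D$ produces $\operatorname{sech}(D)=\int_{-\infty}^{\infty} g(t)\,e^{itD}\,dt$ as a Bochner integral, whence
\[
\|\operatorname{sech}(D)\|\le\int_{-\infty}^{\infty}|g(t)|\,\|e^{itD}\|\,dt=\|g\|_{1}=1 .
\]
Thus $\|\Phi^{-1}\|=\tfrac12\|\operatorname{sech}(D)\|\le\tfrac12$, and applying $\Phi^{-1}$ to $\Phi X=SXS^{-1}+S^{-1}XS$ yields $\|X\|\le\tfrac12\|SXS^{-1}+S^{-1}XS\|$; combined with the first reduction this is the claimed inequality. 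I expect the main obstacle to be the passage from the easy \emph{Hilbert--Schmidt} estimate to the \emph{operator-norm} estimate, since the naive spectral argument only controls the Hilbert--Schmidt norm of $\Phi^{-1}$. The device that overcomes this is precisely the representation of $\operatorname{sech}(D)$ as an average of the conjugation isometries $e^{itD}$ against the probability density $g$, so the two points to pin down carefully are that $g\ge0$ with $\|g\|_{1}=1$ (this is where $\int\operatorname{sech}=\pi$ is used) and that each $e^{itD}$ is genuinely isometric, which is where the self-adjointness of $A$, hence the unitarity of $S^{it}$, enters.
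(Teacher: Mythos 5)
The paper offers no proof of this statement: it is quoted as background (Theorem~\ref{thmqt: Heinz inequality}) directly from Corach--Porta--Recht \cite{CPR1990} and Furuta's book \cite[Theorem 3.12.1]{F2001}, and is then applied as a black box in the proof of Theorem~\ref{thm:first step on numerical radius} (there only with $A=P_{\varepsilon}^{1/2}$ positive invertible, so your reduction step covers more than the application needs). Your proposal is therefore a genuinely self-contained proof rather than a variant of an in-paper argument, and it is correct. The route --- split off the self-adjoint unitary $J=\operatorname{sgn}(A)$ commuting with $|A|$ to reduce to $S=|A|>0$, write $SXS^{-1}+S^{-1}XS=2\cosh(D)X$ with $D=\operatorname{ad}_{\log S}$, and bound $\|\operatorname{sech}(D)\|\le 1$ by averaging the conjugation isometries $e^{itD}(X)=S^{it}XS^{-it}$ against the probability density $g(t)=\tfrac12\operatorname{sech}(\pi t/2)$ --- is the McIntosh-style transference argument, different in spirit from the original CPR proof and from the elementary one in \cite{F2001}; what it buys is exactly what you say: it upgrades the trivial Hilbert--Schmidt bound on $(2\cosh D)^{-1}$ to an operator-norm bound. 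Your constants all check: the identity $AXA^{-1}+A^{-1}XA=J\bigl(|A|X|A|^{-1}+|A|^{-1}X|A|\bigr)J$ holds since $J$ commutes with $|A|$, and with $\int_{\mathbb{R}}\operatorname{sech}(u)\,du=\pi$ one indeed gets $\|g\|_{1}=1$ and $\int_{\mathbb{R}}g(t)e^{itx}\,dt=\operatorname{sech}(x)$ for real $x$.

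One step you should not leave as ``substituting $D$'': the pointwise Fourier identity on $\mathbb{R}$ does not transfer to $D$ via any spectral theorem, because $D$ is not a self-adjoint Hilbert-space operator but an element of $\mathcal{B}(\mathcal{B}(\mathcal{H}))$ that merely has real spectrum (this is precisely the Hilbert--Schmidt caveat you raise yourself). The clean justification is through the Riesz--Dunford calculus: pick a contour $\Gamma$ around $\sigma(D)\subseteq\mathbb{R}$ with $c:=\sup_{z\in\Gamma}|\Im z|<\pi/2$, inside the strip where $\operatorname{sech}$ is holomorphic, and compute
\begin{align*}
\int_{\mathbb{R}}g(t)e^{itD}\,dt
&=\int_{\mathbb{R}}g(t)\,\frac{1}{2\pi i}\oint_{\Gamma}e^{itz}(z-D)^{-1}\,dz\,dt\\
&=\frac{1}{2\pi i}\oint_{\Gamma}\Bigl(\int_{\mathbb{R}}g(t)e^{itz}\,dt\Bigr)(z-D)^{-1}\,dz
=\operatorname{sech}(D),
\end{align*}
where Fubini is legitimate because $|g(t)e^{itz}|\le Ce^{-(\pi/2-c)|t|}$ on $\Gamma$ --- the exponential decay rate $\pi/2$ of $g$ matching the half-width of the strip of holomorphy of $\operatorname{sech}$ is what makes the argument close --- and the inner integral equals $\operatorname{sech}(z)$ on $\Gamma$ by analytic continuation of the real-line formula. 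With this inserted, $\cosh(D)\operatorname{sech}(D)=\mathrm{id}$ by multiplicativity of the holomorphic functional calculus, $\|\operatorname{sech}(D)\|\le\|g\|_{1}=1$, and your conclusion $2\|X\|\le\|SXS^{-1}+S^{-1}XS\|$ follows as stated.
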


\begin{proof}
[Proof of Theorem \ref{thm:first step on numerical radius}]
In view of \eqref{newnumrad}, we have
\begin{equation}
\omega(\mathbf{T})=\sup_{(\lambda_{1},\ldots,\lambda_{d})\in \overline{\mathbb{B}}_{d}}
\omega(\lambda_{1}T_{1}+\ldots+\lambda_{d}T_{d})=
\sup_{(\lambda_{1},\ldots,\lambda_{d})\in \overline{\mathbb{B}}_{d}}\omega(U_{\lambda}P),
\label{eq:numerical radius of UP}
\end{equation}
\begin{equation}
\omega(\widehat{\mathbf{T}})=\sup_{(\lambda_{1},\ldots,\lambda_{d})\in \overline{\mathbb{B}}_{d}}
\omega(P^{\frac{1}{2}}U_{\lambda}
P^{\frac{1}{2}}) \text{ and }
\omega(\mathbf{T}_{1})=\sup_{(\lambda_{1},\ldots,\lambda_{d})\in \overline{\mathbb{B}}_{d}}
\omega(PU_{\lambda}),
\label{eq:numerical radius of PU}
\end{equation}
where $U_{\lambda}=\lambda_{1}V_{1}+\ldots +\lambda_{d}V_{d}$.
We shall prove
$$ \overline{W(P^{\frac{1}{2}}U_{\lambda}
P^{\frac{1}{2}})}
\subseteq
\overline{W\left(\frac{U_{\lambda}P+
PU_{\lambda}}{2}\right)}, $$
where $\overline{W(X)}$ means the closure of numerical range of $X\in \mathcal{B}(\mathcal{H})$.
By taking into consideration Theorem \ref{thmqt: characterization of numerical range}, it suffices to prove the following norm inequality.
\begin{equation}
 \| P^{\frac{1}{2}}U_{\lambda}P^{\frac{1}{2}}-\mu I\|
\leq
\left\| \frac{U_{\lambda}P+
PU_{\lambda}}{2}-\mu I\right\|
\label{eq: norm inequality}
\end{equation}
for all $\mu\in \mathbb{C}$.

For $\varepsilon >0$, let $P_{\varepsilon}:=
P+\varepsilon I>0$. Then $P_{\varepsilon}$ is
positive invertible. Then by Theorem
\ref{thmqt: Heinz inequality}, we have
\begin{align*}
2\| P_{\varepsilon}^{\frac{1}{2}}U_{\lambda}
P_{\varepsilon}^{\frac{1}{2}}-\mu I\|
& \leq
\| P_{\varepsilon}^{\frac{1}{2}}
(P_{\varepsilon}^{\frac{1}{2}}U_{\lambda}
P_{\varepsilon}^{\frac{1}{2}}-\mu I)
P_{\varepsilon}^{-\frac{1}{2}}
+
P_{\varepsilon}^{-\frac{1}{2}}
(P_{\varepsilon}^{\frac{1}{2}}U_{\lambda}
P_{\varepsilon}^{\frac{1}{2}}-\mu I)
P_{\varepsilon}^{\frac{1}{2}}\|\\
& =
\| P_{\varepsilon} U_{\lambda}+
U_{\lambda}P_{\varepsilon}-2\mu I\|.
\end{align*}
By letting $\varepsilon \searrow 0$, we get \eqref{eq: norm inequality}, and hence
$$
\overline{W(P^{\frac{1}{2}}U_{\lambda}P^{\frac{1}{2}})}
 \subseteq
\overline{W\left(\frac{U_{\lambda}P+
PU_{\lambda}}{2}\right)}
 \subseteq
\frac{1}{2} \left\{
\overline{W(PU_{\lambda})}+
\overline{W(U_{\lambda}P)}\right\}.
$$
Therefore, we get
$$ \omega(P^{\frac{1}{2}}U_{\lambda}
P^{\frac{1}{2}})\leq \frac{1}{2}
\Big(\omega(PU_{\lambda})+\omega(U_{\lambda}P)\Big),$$
which in turn implies, by taking the supremum over all $(\lambda_{1},\ldots,\lambda_{d})\in \overline{\mathbb{B}}_{d}$, that
\begin{equation}\label{marj00}
\omega(\widehat{\mathbf{T}})\leq
\frac{1}{2}\omega(\mathbf{T})+
\frac{1}{2}\omega(\mathbf{T}_{1}).
\end{equation}
\end{proof}

The second part of this discussion,
we shall treat commuting tuples of operators.

\begin{theorem}\label{thm: commuting case}
Let $\mathbf{T}=(T_{1},\ldots, T_{d})\in
\mathcal{B}(\mathcal{H})^{d}$
be a commuting tuple of operators.
Then
$$\omega(\widehat{\mathbf{T}})
\leq\omega(\mathbf{T}).$$
\end{theorem}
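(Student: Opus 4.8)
The plan is to build directly on the non-commuting estimate already obtained in Theorem \ref{thm:first step on numerical radius}, namely
$$\omega(\widehat{\mathbf{T}})\leq \tfrac{1}{2}\omega(\mathbf{T})+\tfrac{1}{2}\omega(\mathbf{T}_{1}),\qquad \mathbf{T}_{1}=(PV_{1},\ldots,PV_{d}).$$
Since $\omega(\mathbf{T})$ already appears with weight $\tfrac12$, it suffices to show that in the commuting case $\omega(\mathbf{T}_{1})\leq \omega(\mathbf{T})$; combining this with the displayed inequality immediately yields $\omega(\widehat{\mathbf{T}})\leq \omega(\mathbf{T})$. So the whole burden reduces to comparing $\mathbf{T}_{1}=(PV_{1},\ldots,PV_{d})$ with $\mathbf{T}=(V_{1}P,\ldots,V_{d}P)$.

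The natural tool is the slice formula \eqref{newnumrad} together with the parametrization used in the previous proof: by \eqref{eq:numerical radius of UP} and \eqref{eq:numerical radius of PU},
\begin{equation*}
\omega(\mathbf{T})=\sup_{(\lambda_{1},\ldots,\lambda_{d})\in \overline{\mathbb{B}}_{d}}\omega(U_{\lambda}P)
\quad\text{and}\quad
\omega(\mathbf{T}_{1})=\sup_{(\lambda_{1},\ldots,\lambda_{d})\in \overline{\mathbb{B}}_{d}}\omega(PU_{\lambda}),
\end{equation*}
where $U_{\lambda}=\lambda_{1}V_{1}+\ldots+\lambda_{d}V_{d}$. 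Thus it is enough to prove, for each fixed $\lambda\in\overline{\mathbb{B}}_{d}$, the single-operator inequality $\omega(PU_{\lambda})\leq\omega(U_{\lambda}P)$. Here I expect the commutativity hypothesis to enter in an essential way: $PU_{\lambda}$ and $U_{\lambda}P$ are in general genuinely different products, and without extra structure one only has $\omega(AB)$ versus $\omega(BA)$, which are not comparable for arbitrary $A,B$. The key observation should be that when the $T_{k}$ commute, the factors $P$ and $V_{k}$ interact in a way that makes $PU_{\lambda}$ and $U_{\lambda}P$ \emph{unitarily equivalent} (or at least that one is the adjoint-type rearrangement of the other), from which equality of numerical radii follows.

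Concretely, I would argue as follows. Because $r(\mathbf{T})$ and the joint spectrum behave well under the polar data, one knows (from the commuting structure exploited in \cite{benhida}) that $P$ commutes with each $V_{k}$, or at least that $PU_{\lambda}=U_{\lambda}P$ holds after restriction to the relevant invariant subspace $\overline{\mathcal{R}(P)}$. If $P$ commutes with $U_{\lambda}$, then $PU_{\lambda}=U_{\lambda}P$ as operators and the two numerical radii are trivially equal, giving $\omega(\mathbf{T}_{1})=\omega(\mathbf{T})$ and the theorem at once. If instead the commutation is only partial, I would use the general fact that for any $X,Y$ the products $XY$ and $YX$ have the same numerical radius provided one of the factors is an isometry on the orthogonal complement of the kernel, invoking the partial-isometry property of $V$ recorded in \eqref{eq:kernel condition}; this lets one transfer $\omega(PU_{\lambda})$ to $\omega(U_{\lambda}P)$ by an isometric intertwining.

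The main obstacle I anticipate is precisely this last step: making rigorous \emph{why} commutativity of the $T_{k}$ forces $PU_{\lambda}$ and $U_{\lambda}P$ to share their numerical radius. Commutativity of the tuple $\mathbf{T}$ does not obviously descend to commutativity of $P$ with the individual $V_{k}$, and $U_{\lambda}$ mixes the $V_{k}$ with scalar weights, so I would need a lemma asserting that in the commuting case $\sqrt{P}$ commutes with the joint partial isometry $V$, equivalently that $\widehat{T_{k}}$ is the genuine Aluthge transform up to the shared factor $P$. Once such a commutation lemma is in hand (or once one reduces to $\overline{\mathcal{R}(P)}$ where the partial isometry is a true isometry), the inequality $\omega(PU_{\lambda})\leq\omega(U_{\lambda}P)$ becomes routine, and the theorem follows by taking the supremum over $\lambda\in\overline{\mathbb{B}}_{d}$ and feeding the result back into \eqref{marj00}.
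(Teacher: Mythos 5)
Your reduction is exactly the paper's: feed $\omega(\mathbf{T}_1)\le\omega(\mathbf{T})$ back into \eqref{marj00}, and reduce that to the single-operator inequality $\omega(PU_\lambda)\le\omega(U_\lambda P)$ for each fixed $\lambda\in\overline{\mathbb{B}}_d$. The gap is in the one step that actually uses commutativity, and neither of the two mechanisms you propose for it survives scrutiny. First, commutativity of the tuple $\mathbf{T}$ does not imply that $P$ commutes with the $V_k$ (nor with $U_\lambda$): already for $d=1$, where the commutativity hypothesis is vacuous, $VP=PV$ is the much stronger condition of quasinormality, so no such commutation lemma can exist. Second, the ``general fact'' that $\omega(XY)=\omega(YX)$ when one factor is a partial isometry with the right initial space is false, and the paper's own Example refutes it in the sharpest way: there $P$ is invertible, so $V=(V_1,V_2)$ is a genuine joint isometry ($\sum_k V_k^*V_k=I$), and yet $\omega(PV_1,PV_2)>\omega(V_1P,V_2P)$. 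So the inequality \eqref{eq: problem} genuinely needs commutativity of $\mathbf{T}$, not just the isometric structure of $V$, and your proposal never supplies the argument that exploits it.

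What the paper does instead: Lemma \ref{lem: commuting} shows that commutativity of $\mathbf{T}$ is equivalent to the intertwining relations $V_jPV_k=V_kPV_j$, whence $V_kPU_\lambda=U_\lambda PV_k$ for every $k$. Inserting the projection $\sum_k V_k^*V_k$, which acts as the identity on $\mathcal{R}(PU_\lambda)\subseteq\mathcal{R}(P)$, one gets
$$\langle PU_\lambda x\mid x\rangle=\sum_{k=1}^d\langle V_kPU_\lambda x\mid V_kx\rangle=\sum_{k=1}^d\|V_kx\|^2\,\langle U_\lambda Py_k\mid y_k\rangle,\qquad y_k=\tfrac{V_kx}{\|V_kx\|},$$
so that $|\langle PU_\lambda x\mid x\rangle|\le\omega(U_\lambda P)\sum_k\|V_kx\|^2\le\omega(U_\lambda P)$. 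This sub-convex-combination argument is the missing idea; without it, or a substitute of comparable strength, your proof does not close.
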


To prove this, we will introduce the following
lemma.

\begin{lemma}\label{lem: commuting}
Let $\mathbf{T}=(T_{1},\ldots, T_{d})\in
\mathcal{B}(\mathcal{H})^{d}$,
and let $T_{j}=V_{j}P$ with
$P=(\sum_{j=1}^{d}T_{j}^{*}T_{j})^{\frac{1}{2}}$. Then
$\mathbf{T}$ is commuting if and only if
$$ V_{j}PV_{k}=V_{k}PV_{j} $$
holds for $j,k=1,\ldots,d$.
\end{lemma}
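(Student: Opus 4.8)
The plan is to establish the equivalence $\mathbf{T}$ commuting $\iff$ $V_jPV_k = V_kPV_j$ for all $j,k$ by working from the defining relations $T_j = V_jP$ and exploiting the structure of the polar decomposition. The key algebraic identity to keep in mind is that $V_j^*V_k$ assembles into the projection $R = \sum_k V_k^*V_k$ onto $\overline{\mathcal{R}(P)}$, together with the fact (visible from the setting in the Introduction) that each $V_k$ annihilates $\mathcal{N}(P)$ and maps into this range. I would first record the straightforward direction.

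\emph{Proof of the forward implication.} Assume $\mathbf{T}$ is commuting, i.e.\ $T_jT_k = T_kT_j$ for all $j,k$. Writing $T_j = V_jP$ this reads
\begin{equation}\label{eq:commuting-expanded}
V_jPV_kP = V_kPV_jP \qquad (j,k=1,\ldots,d).
\end{equation}
The natural move is to cancel the trailing factor $P$. Since $\mathcal{R}(P)$ may fail to be dense, one cannot cancel $P$ outright; instead one observes that \eqref{eq:commuting-expanded} gives $V_jPV_k = V_kPV_j$ on $\overline{\mathcal{R}(P)}$, while on $\mathcal{N}(P)$ both sides vanish because $P$ kills $\mathcal{N}(P)$ and hence $V_jPV_k x = V_jP(V_kx)$; here one must also check that $V_k$ maps $\mathcal{N}(P)$ into $\mathcal{N}(P)$, which follows from \eqref{eq:kernel condition} once one verifies $\mathcal{R}(V_k) \subseteq \overline{\mathcal{R}(P)}$ and the initial-space condition $V_k = V_k R$. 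Thus $V_jPV_k = V_kPV_j$ on all of $\mathcal{H}$.

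\emph{Proof of the reverse implication.} Conversely, assume $V_jPV_k = V_kPV_j$ for all $j,k$. Multiplying on the right by $P$ recovers \eqref{eq:commuting-expanded}, i.e.\ $T_jT_k = V_jPV_kP = V_kPV_jP = T_kT_j$, so $\mathbf{T}$ is commuting. This direction is immediate.

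\emph{Where the difficulty lies.} The only delicate point is the cancellation of $P$ in the forward direction: passing from $V_jPV_kP = V_kPV_jP$ to $V_jPV_k = V_kPV_j$ requires controlling the behaviour of $V_j, V_k$ on $\mathcal{N}(P) = \overline{\mathcal{R}(P)}^{\perp}$. I expect the main obstacle to be the careful bookkeeping showing that both operators $V_jPV_k$ and $V_kPV_j$ agree on $\mathcal{N}(P)$ (by vanishing there) as well as on $\overline{\mathcal{R}(P)}$ (by the cancellation), so that the identity holds globally; this rests on the kernel relation \eqref{eq:kernel condition} and the partial-isometry properties $V_k^*V_k \le R$ and $V_k R = V_k$ coming from the polar decomposition $S = VP$. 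Once these are in hand, the equivalence follows by combining the two implications.
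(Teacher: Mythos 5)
Your proposal is correct and follows essentially the same route as the paper: pass from $V_jPV_kP=V_kPV_jP$ to equality of $V_jPV_k$ and $V_kPV_j$ on $\overline{\mathcal{R}(P)}$, check the identity separately on $\mathcal{N}(P)$, and combine via $\mathcal{H}=\overline{\mathcal{R}(P)}\oplus\mathcal{N}(P)$, with the converse being immediate upon right-multiplication by $P$. The one point to streamline is the kernel part: by \eqref{eq:kernel condition} one has $\mathcal{N}(P)=\bigcap_{i}\mathcal{N}(V_{i})\subseteq\mathcal{N}(V_{k})$, so $V_{k}$ vanishes outright on $\mathcal{N}(P)$ and both operators are $0$ there; you do not need the weaker statement that $V_{k}$ maps $\mathcal{N}(P)$ into $\mathcal{N}(P)$, and the auxiliary claim $\mathcal{R}(V_{k})\subseteq\overline{\mathcal{R}(P)}$ is neither needed nor true in general (though your other cited fact, $V_{k}=V_{k}R$, does the job).
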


\begin{proof}
Since $T_{j}T_{k}=T_{k}T_{j}$, we have
$V_{j}PV_{k}P=V_{k}PV_{j}P$, that is,
$V_{j}PV_{k}=V_{k}PV_{j}$ holds on
$\overline{\mathcal{R}(P)}$.
By \eqref{eq:kernel condition},
$\displaystyle
\overline{\mathcal{R}(P)}^{\perp} =
\mathcal{N}(P)=
\bigcap_{k=1}^{d} \mathcal{N}(V_{k})\subset
\mathcal{N}(V_{k})$ for $k=1,\ldots,d$.
Hence we have
$V_{j}PV_{k}=V_{k}PV_{j}=0$ on $\mathcal{N}(P)$.
Therefore $V_{j}PV_{k}=V_{k}PV_{j}$ holds
on $\mathcal{H}=\overline{\mathcal{R}(P)}\oplus
\mathcal{N}(P)$.
The converse implication is obvious.
Thus the proof is completed.
\end{proof}

\begin{proof}
[Proof of Theorem \ref{thm: commuting case}]
Since Theorem \ref{thm:first step on numerical radius},
we have only to prove the following
inequality.
$$
\omega(\mathbf{T}_{1}) \leq \omega(\mathbf{T}),
$$
that is, we will prove
\begin{equation}
\omega(PU_{\lambda}) \leq \omega(U_{\lambda}P).
\label{eq: problem}
\end{equation}
%

%
%
%
Since $\sum_{k=1}^{d}V_{k}^{*}V_{k}$ is a projection
onto $\overline{\mathcal{R}(P)}$, we have
$$
\langle PU_{\lambda}x|x \rangle
 =
\langle \left(\sum_{k=1}^{d}V_{k}^{*}V_{k}\right)
PU_{\lambda}x|x \rangle \\
 =
\sum_{k=1}^{d} \langle
V_{k}PU_{\lambda} x| V_{k} x
\rangle.$$
Moreover by Lemma \ref{lem: commuting},
\begin{align*}
V_{k}PU_{\lambda}
& =
V_{k}P
\left(\sum_{j=1}^{d}\lambda_{j}V_{j}\right)
 =
\left(\sum_{j=1}^{d}\lambda_{j}V_{j}\right)PV_{k}
=
U_{\lambda}PV_{k}.
\end{align*}
Then
$$ \langle PU_{\lambda}x|x \rangle  =
\sum_{k=1}^{d}\langle V_{k}P U_{\lambda}x|
V_{k}x\rangle
=
\sum_{k=1}^{d}\langle U_{\lambda}P V_{k}x|
V_{k}x\rangle.
$$
Put $y_{k}=\frac{V_{k}x}{\|V_{k}x\|}$.
Since $\sum_{k=1}^{d}V_{k}^{*}V_{k}$ is a projection
onto $\overline{\mathcal{R}(P)}$, we have
\begin{align*}
|\langle PU_{\lambda}x|x\rangle |
& =
\left| \sum_{k=1}^{d} \| V_{k}x \|^{2}
\langle U_{\lambda}P y_{k} \mid y_{k}\rangle \right| \\
& \leq
 \sum_{k=1}^{d} \| V_{k}x \|^{2}
\left|\langle U_{\lambda}P y_{k} \mid y_{k}\rangle \right| \\
& \leq
 \sum_{k=1}^{d} \| V_{k}x \|^{2}
\omega(U_{\lambda}P) \\
& =
\langle \sum_{k=1}^{d} V_{k}^{*}V_{k}x \mid x \rangle\,
\omega(U_{\lambda}P)
 \leq
\omega (U_{\lambda}P).
\end{align*}
Therefore we have
$\omega(PU_{\lambda}) \leq \omega(U_{\lambda}P)$,
and the proof is finished.
\end{proof}

\begin{remark}
We may not remove the commutative condition
in Theorem \ref{thm: commuting case}. In fact
there is a counterexample of
non-commuting tuple of operators
for \eqref{eq: problem} as follows.
\end{remark}

\begin{example}
Let
$$\mathbf{T}=(T_{1}, T_{2})=(
\begin{pmatrix}0&1\\0&0\end{pmatrix},
\begin{pmatrix}0&-1\\1&0\end{pmatrix}). $$
%
A short calculation shows that
$$\omega^2(T_1,T_2)=\sup_{|x|^2+|y|^2=1}\left\{ |x\overline{y}|^2+4|\Im(x\overline{y})|^2\right\},$$
where $\Im (x\overline{y})$ means the imaginary
part of $x\overline{y}$.
Let $(x,y)\in \mathbb{C}^2$ be such that $|x|^2+|y|^2=1$. Then, it can be observed that
$$|x\overline{y}|^2+4|\Im(x\overline{y})|^2\leq \frac{5}{4}.$$
This implies that $\omega^2(T_1,T_2)\leq \frac{5}{4}.$ Moreover, if we take $x=\frac{1+i}{2}$ and $y=\overline{x}$ we can prove that
$$\omega^2(T_1,T_2)= \frac{5}{4}.$$

On the other hand it is not difficult to verify that
$$P=\begin{pmatrix}1&0\\0&\sqrt{2}\end{pmatrix},\;V_1=\begin{pmatrix}0&\tfrac{\sqrt{2}}{2}\\0&0\end{pmatrix}\;\text{ and }\;V_2=\begin{pmatrix}0&-\tfrac{\sqrt{2}}{2}\\1&0\end{pmatrix}.$$
So, we get
$$PV_1=\begin{pmatrix}0&\tfrac{\sqrt{2}}{2}\\ 0&0\end{pmatrix}=\frac{\sqrt{2}}{2}S_1\;\text{ and }\;PV_2=\begin{pmatrix}0&-\tfrac{\sqrt{2}}{2}\\ \sqrt{2}&0\end{pmatrix}=\frac{\sqrt{2}}{2}S_2,$$
where $S_1=\begin{pmatrix}0&1\\0&0\end{pmatrix}\;\text{ and }\;S_2=\begin{pmatrix}0&-1\\2&0\end{pmatrix}.$ So, $\omega(PV_1,PV_2)=\tfrac{\sqrt{2}}{2}\omega(S_1,S_2)$. Now, it can be observed that
$$\omega^2(S_1,S_2)=\sup_{|x|^2+|y|^2=1}\left\{ |x\overline{y}|^2+|2i\Im(x\overline{y})+x\overline{y}|^2\right\}.$$
By similar arguments as above, on can verify that $\omega^2(S_1,S_2)=\frac{10}{4}$ which implies that
$$\omega(PV_1,PV_2)=\frac{5\sqrt{2}}{4}> \omega(T_1,T_2).$$
\end{example}

\section{Precise estimation of joint numerical radius}\label{s3}
In this section, we shall give a precise
estimation of joint numerical radius.

\begin{theorem}\label{thm:1/2}
Let $\mathbf{T}=(T_{1},\ldots,T_{d})\in \mathcal{B}(\mathcal{H})^{d}$ be a $d$-tuple of operators. Then,
$$ \omega(\mathbf{T}) \leq
\frac{1}{2}\|\mathbf{T}\|+
\frac{1}{2}\omega(\widehat{\mathbf{T}}).$$
\end{theorem}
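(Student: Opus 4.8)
The plan is to reduce, exactly as in the proofs of Theorems \ref{thm:first step on numerical radius} and \ref{thm: commuting case}, to a single-operator statement about the contraction $U_{\lambda}:=\lambda_{1}V_{1}+\cdots+\lambda_{d}V_{d}$ and the positive operator $P$, and then to settle that statement using the real-part characterization \eqref{zamnum}. Recall first that $\|\mathbf{T}\|=\|P\|$ by Lemma \ref{lem:norm of tuple of operators} (since $\|\mathbf{T}\|^{2}=\|\sum_{k}T_{k}^{*}T_{k}\|=\|P^{2}\|$), and that by \eqref{eq:numerical radius of UP}--\eqref{eq:numerical radius of PU} one has $\omega(\mathbf{T})=\sup_{\lambda}\omega(U_{\lambda}P)$ and $\omega(\widehat{\mathbf{T}})=\sup_{\lambda}\omega(P^{\frac12}U_{\lambda}P^{\frac12})$, the suprema running over $\lambda\in\overline{\mathbb{B}}_{d}$. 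Hence it suffices to prove, for each fixed $\lambda$, the single-operator inequality
\[ \omega(U_{\lambda}P)\le\tfrac12\|P\|+\tfrac12\,\omega\bigl(P^{\frac12}U_{\lambda}P^{\frac12}\bigr); \]
taking $\sup_{\lambda\in\overline{\mathbb{B}}_{d}}$ then yields the theorem, because $\tfrac12\|P\|$ is independent of $\lambda$.

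To prove this inequality I would apply \eqref{zamnum} simultaneously to $U_{\lambda}P$ and to $\widehat{T}_{\lambda}:=P^{\frac12}U_{\lambda}P^{\frac12}$, which converts the claim into the family of self-adjoint norm inequalities
\[ \bigl\|\Re(e^{i\theta}U_{\lambda}P)\bigr\|\le\tfrac12\|P\|+\tfrac12\bigl\|\Re(e^{i\theta}\widehat{T}_{\lambda})\bigr\|,\qquad\theta\in\mathbb{R}, \]
since taking $\sup_{\theta}$ on both sides returns $\omega(U_{\lambda}P)$ and $\tfrac12\|P\|+\tfrac12\omega(\widehat{T}_{\lambda})$ respectively. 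Writing $V:=e^{i\theta}U_{\lambda}$, which is a contraction because $\|U_{\lambda}\|\le\|\lambda\|_{2}\le1$, and $R:=\Re(V)$, and observing that $\Re(e^{i\theta}\widehat{T}_{\lambda})=P^{\frac12}RP^{\frac12}$, this self-adjoint inequality follows (applied to both $V$ and $-V$, which accounts for the two ends of the spectrum of $\Re(VP)$) from the single operator inequality
\[ VP+PV^{*}-P^{\frac12}RP^{\frac12}\le\|P\|\,I. \]

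This last inequality is the heart of the matter, and I expect it to be the main obstacle. The constant $\tfrac12$ cannot be extracted from a triangle-inequality split of the identity $U_{\lambda}P=\widehat{T}_{\lambda}+(U_{\lambda}P^{\frac12}-P^{\frac12}U_{\lambda})P^{\frac12}$, nor from the crude estimate $\|U_{\lambda}\|\le1$: such decompositions double-count the contributions of $\widehat{T}_{\lambda}$ and of the commutator term and only reproduce the trivial bound $\omega(U_{\lambda}P)\le\|P\|$. Instead one must exploit the \emph{full} contractivity of $V$, encoded in the positivity $\left(\begin{smallmatrix}I & V\\ V^{*} & I\end{smallmatrix}\right)\ge0$, together with a congruence by a $\mathrm{diag}(I,P^{\frac12})$-type operator and a Schur-complement computation, so as to exhibit $\|P\|I-VP-PV^{*}+P^{\frac12}RP^{\frac12}$ as a manifestly positive form. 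As in the proof of Theorem \ref{thm:first step on numerical radius}, the possible non-invertibility of $P$ should be circumvented by first working with $P_{\varepsilon}=P+\varepsilon I$ and letting $\varepsilon\searrow0$.

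As a consistency check while searching for the positivity certificate, the tuple $\mathbf{T}=\bigl(\left(\begin{smallmatrix}0&2\\0&0\end{smallmatrix}\right)\bigr)$ is instructive: here $P=\mathrm{diag}(0,2)$, so $\widehat{\mathbf{T}}=0$ and $\omega(\widehat{\mathbf{T}})=0$, while $\|\mathbf{T}\|=2$ and $\omega(\mathbf{T})=1$, whence both sides of the asserted inequality equal $1$. Thus the constant $\tfrac12$ and the appearance of $\|\mathbf{T}\|$ (rather than $\omega(\mathbf{T})$) are sharp, and the operator inequality above must hold with equality at the top eigenvector of $P$ in this case, which pins down the extremal configuration the certificate has to accommodate.
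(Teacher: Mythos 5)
Your reduction — first to the single-operator inequality $\omega(U_{\lambda}P)\le\tfrac12\|P\|+\tfrac12\,\omega(P^{\frac12}U_{\lambda}P^{\frac12})$ for each fixed $\lambda\in\overline{\mathbb{B}}_{d}$, then via \eqref{zamnum} to the norm inequalities $\|\Re(e^{i\theta}U_{\lambda}P)\|\le\tfrac12\|P\|+\tfrac12\|\Re(e^{i\theta}P^{\frac12}U_{\lambda}P^{\frac12})\|$ — is exactly the paper's reduction. The gap is the final step, which you rightly call the heart of the matter but leave as a plan: worse, the operator inequality you propose to reduce to, $VP+PV^{*}-P^{\frac12}RP^{\frac12}\le\|P\|I$, is \emph{false}, so no Schur-complement or congruence certificate can exist. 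Take $d=1$, $\mathcal{H}=\mathbb{C}^{2}$, $\lambda=1$, $\theta=0$, and $T_{1}=V_{1}=\begin{pmatrix}a&0\\b&0\end{pmatrix}$ with $a,b>0$, $a^{2}+b^{2}=1$, so that $P=P^{\frac12}=\mathrm{diag}(1,0)$. Then
\[
VP+PV^{*}-P^{\frac12}\Re(V)P^{\frac12}
=\begin{pmatrix}2a&b\\b&0\end{pmatrix}-\begin{pmatrix}a&0\\0&0\end{pmatrix}
=\begin{pmatrix}a&b\\b&0\end{pmatrix},
\]
whose largest eigenvalue $\tfrac12\bigl(a+\sqrt{4-3a^{2}}\bigr)$ exceeds $\|P\|=1$ for every $0<a<1$. (The theorem itself holds here, with equality: $\widehat{T_{1}}=\mathrm{diag}(a,0)$, so $\tfrac12\|T_{1}\|+\tfrac12\omega(\widehat{T_{1}})=\tfrac{1+a}{2}=\omega(T_{1})$.) The point is that the top of the spectrum of $\Re(VP)$ need not be attained in a direction where $P^{\frac12}RP^{\frac12}$ is large, so the correct norm inequality cannot be upgraded to an operator inequality against $P^{\frac12}RP^{\frac12}$ itself; only its norm may be moved to the right-hand side, and that weaker statement is what must be proved directly.

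The paper's mechanism stays at the level of norms throughout. For a unit vector $x$, the generalized polarization identity and $P\ge0$ give $\langle\Re(e^{i\theta}U_{\lambda}P)x\mid x\rangle\le\tfrac14\langle(e^{i\theta}+U_{\lambda}^{*})^{*}P(e^{i\theta}+U_{\lambda}^{*})x\mid x\rangle$ (the subtracted term is nonnegative and is discarded), which is at most $\tfrac14\|(e^{i\theta}+U_{\lambda}^{*})^{*}P(e^{i\theta}+U_{\lambda}^{*})\|$; the identity $\|X^{*}X\|=\|XX^{*}\|$ with $X=P^{\frac12}(e^{i\theta}+U_{\lambda}^{*})$ converts this to $\tfrac14\|P+P^{\frac12}U_{\lambda}^{*}U_{\lambda}P^{\frac12}+2\Re(e^{i\theta}P^{\frac12}U_{\lambda}P^{\frac12})\|$, and the triangle inequality together with $\|U_{\lambda}\|\le\|\lambda\|_{2}\le1$ and \eqref{zamnum} delivers $\tfrac12\|P\|+\tfrac12\omega(\widehat{\mathbf{T}})$. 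The two moves that your operator-order formulation cannot reproduce are precisely the passage through the scalar bound $\langle Ax\mid x\rangle\le\|A\|$ for $A\ge0$ \emph{before} flipping $X^{*}X$ into $XX^{*}$. Your sharpness example and the identification of the extremal configuration are correct, but they do not substitute for this argument.
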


\begin{remark}
By letting $d=1$ in Theorem \ref{thm:1/2}, we get the well-known result proved by the second author in \cite{Y2007} asserting that
$$ \omega(T) \leq \frac{1}{2}\|T\|+\frac{1}{2}
\omega(\widetilde{T}),$$
for every $T\in \mathcal{B}(\mathcal{H})$.
\end{remark}

\begin{proof}
By \eqref{eq:numerical radius of UP},
we see that
$$\omega(\mathbf{T})
 = \sup_{(\lambda_{1},\ldots,\lambda_{d})\in \overline{\mathbb{B}}_d}
\omega(U_{\lambda} P),
$$

Now, let $x\in \mathcal{H}$ with $\|x\|=1$.
By the generalized polarization identity (see \cite{Y2007}) we get
\begin{align*}
\langle e^{i\theta}U_{\lambda} P x\mid x\rangle & =
\langle e^{i\theta}Px\mid U^{*}_{\lambda}x\rangle \\
& =\frac{1}{4}\bigl(\langle P(e^{i\theta}+U^{*}_{\lambda})x\mid  (e^{i\theta}+U^{*}_{\lambda})x\rangle
- \langle P(e^{i\theta}-U^{*}_{\lambda})x\mid  (e^{i\theta}-U^{*}_{\lambda})x\rangle \bigr)\\
&\;+\frac{i}{4}\bigl(\langle P(e^{i\theta}+iU^{*}_{\lambda})x\mid  (e^{i\theta}+iU^{*}_{\lambda})x
\rangle -\langle P(e^{i\theta}-iU^{*}_{\lambda})x\mid (e^{i\theta}-iU^{*}_{\lambda})x\rangle \bigr).
\end{align*}
Noting that all inner products of the terminal side are all positive since $P\geq0$. Hence, one observes that
\begin{align*}
\langle \Re(e^{i\theta}U_{\lambda}P)x\mid x\rangle
& = \Re(\langle e^{i\theta}U_{\lambda}Px\mid x\rangle)\\
& =\frac{1}{4}\bigl(\langle (e^{i\theta}+U^{*}_{\lambda})^{*}P
(e^{i\theta}+U^{*}_{\lambda})x\mid x\rangle -\langle (e^{i\theta}-U^{*}_{\lambda})^{*}P
(e^{i\theta}-U^{*}_{\lambda})x\mid x\rangle \bigr)\\
& \leq\frac{1}{4}\langle
(e^{i\theta}+U^{*}_{\lambda})^{*}
P(e^{i\theta}+U^{*}_{\lambda})x\mid x\rangle \\
& \leq\frac{1}{4}\left\|
(e^{i\theta}+U^{*}_{\lambda})^{*}
P(e^{i\theta}+U^{*}_{\lambda})\right\|\\
& =\frac{1}{4}\left\|P^{\frac{1}{2}}
(e^{i\theta}+U^{*}_{\lambda})
(e^{-i\theta}+U_{\lambda})P^{\frac{1}{2}}
\right\|\quad\text{(by }\|X^*X\|=\|XX^*\|)\\
& =\frac{1}{4}\left\| P+
P^{\frac{1}{2}}U^{*}_{\lambda}U_{\lambda}
P^{\frac{1}{2}}+
2\Re(e^{i\theta}
P^{\frac{1}{2}}U_{\lambda}P^{\frac{1}{2}})\right\|\\
& \leq
\frac{1}{4} \|P\|+\frac{1}{4} \|P\|\left\|U^{*}_{\lambda}U_{\lambda}\right\|+\frac{1}{2}\left\| \Re (e^{i\theta}P^{\frac{1}{2}}U_{\lambda}
P^{\frac{1}{2}})\right\|\\
&\leq
\frac{1}{4} \|P\|+\frac{1}{4} \|P\|\left\|U^{*}_{\lambda}U_{\lambda}\right\|+
\frac{1}{2}\omega
\left(P^{\frac{1}{2}}U_{\lambda}P^{\frac{1}{2}}\right)
\quad\text{(by \eqref{zamnum}).}
\end{align*}
So, by taking the supremum over all $x\in \mathcal{H}$ with $\|x\|=1$ in the above inequality and then using \eqref{zamnum} we get
\begin{align}\label{esss1}
\omega\left(U_{\lambda}P\right)
& \leq \frac{1}{4} \|P\|+\frac{1}{4} \|P\|\left\|U^{*}_{\lambda}U_{\lambda}\right\|+\frac{1}{2}\omega\left(P^{\frac{1}{2}}U_{\lambda}
P^{\frac{1}{2}}\right) \nonumber\\
 &\leq \frac{1}{4} \|P\|+\frac{1}{4} \|P\|\left\|U^{*}_{\lambda}U_{\lambda}\right\|+\frac{1}{2}\omega(\widehat{\mathbf{T}})\quad (\text{by }\; \eqref{newnumrad}).
\end{align}
On the other hand, let $x\in \mathcal{H}$ with
$\|x\|=1$ and $(\lambda_1,\ldots,\lambda_d)\in
\overline{\mathbb{B}_d}$. By applying the Cauchy-Schwarz inequality and making elementary calculations we see that
\begin{align*}
\langle U^{*}_{\lambda}U_{\lambda}x \mid x \rangle
& = \sum_{j=1}^d \sum_{k=1}^d \overline{\lambda_{j}}\lambda_{k}\langle V_kx\mid V_jx\rangle\\
&\le \sum_{j=1}^d \sum_{k=1}^d |\lambda_j|\cdot |\lambda_k| \cdot \|V_kx\|\cdot \|V_jx\|\\
&=\left( \sum_{k=1}^d |\lambda_k|\cdot \|V_kx\| \right)^2\\
%
& \le
\left( \sum_{j=1}^d |\lambda_j|^2 \right) \left( \sum_{j=1}^d \|V_jx\|^2 \right)\\
& =
\left( \sum_{j=1}^d |\lambda_j|^2 \right) \left( \sum_{j=1}^d \langle V_j^{*}V_jx |  x\rangle  \right)
\leq
\left( \sum_{j=1}^d |\lambda_j|^2 \right)\left\|\sum_{i=1}^d V_{i}^{*}V_{i}\right\|\leq 1.
\end{align*}
So, by taking the supremum over all $x\in \mathcal{H}$ with $\|x\|=1$, we obtain
$\left\|U^{*}_{\lambda}U_{\lambda}\right\|\le 1.$
This yields, by using \eqref{esss1}, that
$$
\omega\left(U_{\lambda}P\right)
 \leq \frac{1}{2} \|P\|+\frac{1}{2}\omega(\widehat{\mathbf{T}}).
$$
Thus, by taking the supremum over all $(\lambda_{1},\ldots,\lambda_{d})\in\overline{\mathbb{B}}_d$ in the above inequality and then using
\eqref{eq:numerical radius of UP}, we obtain
\begin{equation*}
\omega(\mathbf{T})\leq \frac{1}{2} \|P\|+\frac{1}{2}\omega(\widehat{\mathbf{T}}).
\end{equation*}
Therefore, we get the desired result since $\|P\|=\|\mathbf{T}\|$.
\end{proof}

\section{Spectral radius}\label{s4}
In this section, we shall characterize the joint spectral radius via spherical Aluthge transform.

\begin{theorem}\label{th:spectral radius}
Let $\mathbf{T}=(T_{1},\ldots,T_{d})\in\mathcal{B}(\mathcal{H})^{d}$ be a commuting $d$-tuple of operators. Then
$$ \lim_{n\to \infty}\|\widehat{\mathbf{T}}_{n}\|=
r(\mathbf{T}), $$
where $\widehat{\mathbf{T}}_{n}$ means the $n$-th iteration of spherical Aluthge transform, i.e.,
$\widehat{\mathbf{T}}_{n}:=
\widehat{\widehat{\mathbf{T}}_{n-1}}$, and
$ \widehat{\mathbf{T}}_{0}:=\mathbf{T}$ for a non-negative integer $n$.
\end{theorem}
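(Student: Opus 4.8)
The plan is to show that the scalar sequence $a_n := \|\widehat{\mathbf{T}}_n\|$ is nonincreasing and to squeeze its limit between $r(\mathbf{T})$ from below and $r(\mathbf{T})$ from above. First I would record that each iterate stays commuting: if $\mathbf{S}$ is commuting with $S_k = V_k P$, then by Lemma \ref{lem: commuting} one has $V_jPV_k = V_kPV_j$, whence $\widehat{S}_j\widehat{S}_k = P^{1/2}V_jPV_kP^{1/2} = P^{1/2}V_kPV_jP^{1/2} = \widehat{S}_k\widehat{S}_j$, so $\widehat{\mathbf{S}}$ is again commuting. Thus every $\widehat{\mathbf{T}}_n$ is a commuting tuple, the Taylor-spectrum (hence joint-spectral-radius) invariance recorded in the Introduction applies at each level, giving $r(\widehat{\mathbf{T}}_n) = r(\mathbf{T})$ for all $n$, and by Theorem \ref{thm:norm decreasing} applied to $\widehat{\mathbf{T}}_{n-1}$ we get $a_n \le a_{n-1}$, so $a_n \downarrow L$ for some $L \ge 0$.

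For the lower bound $L \ge r(\mathbf{T})$, Lemmas \ref{lem:norm of tuple of operators} and \ref{lem:Lemma 2} give submultiplicativity $\|\mathbf{S}\mathbf{R}\| \le \|\mathbf{S}\|\,\|\mathbf{R}\|$ (take $A = \sum_i S_i^*S_i$ and $X_k = R_k$ in Lemma \ref{lem:Lemma 2}), hence $\|\mathbf{S}^n\| \le \|\mathbf{S}\|^n$; combined with \eqref{jointradii002} this yields $r(\mathbf{S}) \le \|\mathbf{S}\|$ for every commuting tuple. Applying this to $\widehat{\mathbf{T}}_n$ gives $r(\mathbf{T}) = r(\widehat{\mathbf{T}}_n) \le \|\widehat{\mathbf{T}}_n\| = a_n$, and letting $n \to \infty$ we obtain $r(\mathbf{T}) \le L$.

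The upper bound $L \le r(\mathbf{T})$ is the crux. A clean auxiliary inequality I would prove first is $\|\widehat{\mathbf{S}}\|^2 \le \|\mathbf{S}^2\|$ for every tuple $\mathbf{S}$. Writing $\Phi(X) = \sum_k V_k^* X V_k$, a completely positive map with $\Phi(I) = \sum_k V_k^*V_k \le I$ (the projection onto $\overline{\mathcal{R}(P)}$), one computes $\|\widehat{\mathbf{S}}\|^2 = \|P^{1/2}\Phi(P)P^{1/2}\|$ and $\|\mathbf{S}^2\| = \|P\,\Phi(P^2)\,P\|^{1/2}$. The Kadison--Schwarz inequality for the $2$-positive contraction $\Phi$ gives $\Phi(P)^2 \le \Phi(P^2)$, so by operator monotonicity of the square root $\Phi(P) \le \Phi(P^2)^{1/2}$; conjugating by $P^{1/2}$ and then applying the mixed Schwarz inequality $\|P^{1/2}XP^{1/2}\| \le \|PX\|^{1/2}\|XP\|^{1/2}$ with $X = \Phi(P^2)^{1/2}$ yields exactly $\|\widehat{\mathbf{S}}\|^2 \le \|P\Phi(P^2)P\|^{1/2} = \|\mathbf{S}^2\|$. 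In particular $a_n^2 \le \|\widehat{\mathbf{T}}_{n-1}^2\| \le a_{n-1}^2$, so $\|\widehat{\mathbf{T}}_n^2\| \to L^2$.

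The difficulty is that this one-step inequality does not iterate to the naive bound $a_n \le \|\mathbf{T}^{2^n}\|^{1/2^n}$ (which need not hold, as one already sees for a single Jordan block), so it cannot by itself drive $a_n$ down to $r(\mathbf{T})$; this is precisely where the genuinely nontrivial single-operator argument of \cite{Yam} is needed. The plan to close the gap is a rigidity argument: every cluster point $\mathbf{B}$ of the orbit $(\widehat{\mathbf{T}}_n)$ inherits $\|\widehat{\mathbf{B}}_j\| = L$ for all $j$ (since $a_n \downarrow L$ and the transform is continuous on the relevant set), and one shows that such a norm-stationary commuting tuple must satisfy $\|\mathbf{B}\| = r(\mathbf{B}) = r(\mathbf{T})$, by combining $\|\widehat{\mathbf{B}}_j\| = L$ with the auxiliary inequality to keep $\|\mathbf{B}^{2^k}\|^{1/2^k}$ pinned at $L$ and then invoking \eqref{jointradii002}. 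The main obstacle is making this rigidity step rigorous and, beyond the finite-dimensional case, replacing the compactness that produces $\mathbf{B}$ by a suitable approximation: showing that the spherical Aluthge iteration does not stall above the joint spectral radius is the entire content of the theorem, and it is the step I expect to require the most care.
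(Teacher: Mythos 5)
Your first two steps are sound and match the paper: preservation of commutativity under the spherical Aluthge transform (the paper's Lemma \ref{lem:Lemma0}), the monotone decrease of $a_n=\|\widehat{\mathbf{T}}_n\|$ via Theorem \ref{thm:norm decreasing}, and the lower bound $L\geq r(\mathbf{T})$ (the paper's Lemma \ref{lem:Lemma 1}). Your auxiliary inequality $\|\widehat{\mathbf{S}}\|^{2}\leq\|\mathbf{S}^{2}\|$ is also correct and is essentially the $k=1$ case of the paper's Lemma \ref{lem:Lemma 4}. But the crux --- the upper bound $L\leq r(\mathbf{T})$ --- is not proved. You yourself flag that the one-step inequality does not iterate, and the ``rigidity argument'' you propose in its place is only a sketch: it requires a cluster point $\mathbf{B}$ of the orbit $(\widehat{\mathbf{T}}_n)$, which is unavailable in infinite dimensions (the unit ball of $\mathcal{B}(\mathcal{H})^d$ is not norm-compact, and the transform $\mathbf{S}\mapsto\widehat{\mathbf{S}}$ is not continuous in the topologies where compactness would hold), and even granting $\mathbf{B}$, the claim that a norm-stationary tuple must satisfy $\|\mathbf{B}\|=r(\mathbf{B})$ is asserted, not proved. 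So the proposal has a genuine gap exactly at the step you identify as ``the entire content of the theorem.''

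The missing idea (Wang's scheme, which the paper adapts) is to avoid any limit of the orbit itself and instead control \emph{all} powers uniformly in $n$. One proves two families of inequalities: $\|\widehat{\mathbf{T}}_{n+1}^{k}\|\leq\|\widehat{\mathbf{T}}_{n}^{k}\|$ (Lemma \ref{lem:Lemma 3}) and $\|\widehat{\mathbf{T}}_{n+1}^{k}\|\leq\|\widehat{\mathbf{T}}_{n}^{k+1}\|^{1/2}\|\widehat{\mathbf{T}}_{n}^{k-1}\|^{1/2}$ (Lemma \ref{lem:Lemma 4}), both obtained from the operator-matrix factorization $\widehat{\mathbf{T}}^{k}\leftrightarrow A_k^{1/2}X_kA_k^{1/2}$ together with Theorems \ref{thqt:Theorem A} and \ref{thqt:Theorem B}; combined with the submultiplicativity $\|\mathbf{T}^{k+1}\|\leq\|\mathbf{T}^{k}\|\|\mathbf{T}\|$ (Lemma \ref{lem:Lemma 5}), an induction on $k$ shows $\lim_{n\to\infty}\|\widehat{\mathbf{T}}_{n}^{k}\|=s^{k}$ for every $k$ (Lemma \ref{lem:Lemma 6}). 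Hence $s\leq\|\widehat{\mathbf{T}}_{n}^{k}\|^{1/k}$ for all $n,k$, and now one fixes $n$ and lets $k\to\infty$: by \eqref{jointradii002} applied to the commuting tuple $\widehat{\mathbf{T}}_{n}$, the right-hand side tends to $r(\widehat{\mathbf{T}}_{n})=r(\mathbf{T})$, which forces $s\leq r(\mathbf{T})$. This reversal --- fixing the iteration index and letting the power go to infinity --- is precisely what replaces the compactness you were missing, and it is the step you would need to add to complete your argument.
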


We will prove this by similar arguments as in \cite{W2003}. In order to achieve the goals of the present section, we need the following results.

\begin{thqt}[\cite{bd1995}]
\label{thqt:Theorem A}
Let $A,B, X\in \mathcal{B}(\mathcal{H})$. Then
$$ \| A^{*}XB\|^2\leq \|A^{*}AX\|\,\|XBB^{*}\|. $$
\end{thqt}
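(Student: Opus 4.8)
The plan is to reduce the stated bound to a multiplicative Heinz-type norm inequality by means of the $C^{*}$-identity $\|S\|^{2}=\|SS^{*}\|=\|S^{*}S\|$, and then to establish that inequality by complex interpolation. One preliminary remark: as displayed the left-hand factor should read $\|AA^{*}X\|$ rather than $\|A^{*}AX\|$, since with $\|A^{*}AX\|$ the bound already fails in dimension two — for $A=\left(\begin{smallmatrix}0&1\\0&0\end{smallmatrix}\right)$, $X=\left(\begin{smallmatrix}1&0\\0&0\end{smallmatrix}\right)$ and $B=I$ one has $\|A^{*}XB\|=1$ while $\|A^{*}AX\|=0$. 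I therefore prove $\|A^{*}XB\|^{2}\le\|AA^{*}X\|\,\|XBB^{*}\|$.

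First I would record the identity
$$ \|A^{*}XB\|=\bigl\|(AA^{*})^{1/2}X(BB^{*})^{1/2}\bigr\|. $$
To prove it, write $\|A^{*}XB\|^{2}=\|A^{*}XB\,B^{*}X^{*}A\|=\|A^{*}WA\|$ with $W:=(XB)(XB)^{*}\ge0$, and then chase the $C^{*}$-identity together with the functional calculus for positive operators: $\|A^{*}WA\|=\|W^{1/2}A\|^{2}=\|W^{1/2}(AA^{*})W^{1/2}\|=\|(AA^{*})^{1/2}W(AA^{*})^{1/2}\|=\|(AA^{*})^{1/2}XB\|^{2}$. Thus $\|A^{*}XB\|=\|(AA^{*})^{1/2}XB\|$; applying the same identity to the adjoint $\bigl((AA^{*})^{1/2}XB\bigr)^{*}=B^{*}X^{*}(AA^{*})^{1/2}$, with $B$ now in the role of $A$, replaces $B$ by $(BB^{*})^{1/2}$ and yields the displayed identity. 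Writing $P:=AA^{*}\ge0$ and $Q:=BB^{*}\ge0$, and using $\|AA^{*}X\|=\|PX\|$, $\|XBB^{*}\|=\|XQ\|$, the claim becomes
$$ \bigl\|P^{1/2}XQ^{1/2}\bigr\|^{2}\le\|PX\|\,\|XQ\|. $$

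It remains to prove this last inequality for arbitrary $P,Q\ge0$ and $X\in\mathcal{B}(\mathcal{H})$; this interpolation step is the crux. I would first pass to the invertible case by replacing $P,Q$ with $P_{\varepsilon}:=P+\varepsilon I$ and $Q_{\varepsilon}:=Q+\varepsilon I$ and letting $\varepsilon\searrow0$ at the end, using norm-continuity of the square root. For $P_{\varepsilon},Q_{\varepsilon}>0$ consider the operator-valued function $F(z):=P_{\varepsilon}^{\,z}XQ_{\varepsilon}^{\,1-z}$, analytic and bounded on the strip $0\le\Re z\le1$. On the two edges the powers $P_{\varepsilon}^{\,it}$ and $Q_{\varepsilon}^{\,-it}$ are unitary, so $\|F(it)\|\le\|XQ_{\varepsilon}\|$ and $\|F(1+it)\|\le\|P_{\varepsilon}X\|$ for all $t\in\mathbb{R}$. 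Applying the Hadamard three-lines theorem to the scalar functions $z\mapsto\langle F(z)\xi\mid\eta\rangle$ for unit vectors $\xi,\eta$, and taking the supremum over $\xi,\eta$, gives $\|F(1/2)\|\le\|P_{\varepsilon}X\|^{1/2}\|XQ_{\varepsilon}\|^{1/2}$, i.e. $\|P_{\varepsilon}^{1/2}XQ_{\varepsilon}^{1/2}\|^{2}\le\|P_{\varepsilon}X\|\,\|XQ_{\varepsilon}\|$; letting $\varepsilon\searrow0$ completes the proof.

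The delicate point is precisely the inequality $\|P^{1/2}XQ^{1/2}\|^{2}\le\|PX\|\,\|XQ\|$, and I expect it to be the main obstacle to any fully elementary treatment. The naive $2\times2$ positive-block (Cauchy--Schwarz) argument, applied to $\left(\begin{smallmatrix}P & P^{1/2}XQ^{1/2}\\ Q^{1/2}X^{*}P^{1/2} & Q^{1/2}X^{*}XQ^{1/2}\end{smallmatrix}\right)\ge0$, only bounds $\|P^{1/2}XQ^{1/2}\|^{2}$ by $\|P\|\,\|XQ^{1/2}\|^{2}$, which is genuinely weaker; and the additive Heinz inequality of Theorem \ref{thmqt: Heinz inequality} produces sums such as $\|PXQ+X\|$ rather than the required product. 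It is the log-convexity of $r\mapsto\|P^{r}XQ^{1-r}\|$ — equivalently the three-lines estimate above — that makes the bound work, which is why I route the argument through complex interpolation.
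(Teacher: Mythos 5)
The paper offers no proof of this statement: it is imported verbatim as a quoted theorem from Bhatia--Davis \cite{bd1995}, so there is no internal argument to compare yours against. Your proof is correct, and your emendation of the statement is a genuine catch, not a quibble: the counterexample is accurate (with your $A$ and $X$ one has $A^{*}AX=0$ while $\|A^{*}XB\|=1$), and the inequality actually proved by Bhatia and Davis --- and used in this exact form in Wang's paper \cite{W2003}, whose scheme Section \ref{s4} follows --- reads $\|A^{*}XB\|^{2}\leq\|AA^{*}X\|\,\|XBB^{*}\|$; the printed $\|A^{*}AX\|$ is a misprint. Reassuringly, the misprint is harmless where the paper invokes the result (in Lemmas \ref{lem:Lemma 3} and \ref{lem:Lemma 4}): there it is applied with $A=B=A_{k}^{1/2}$ positive, hence self-adjoint, so $A^{*}A=AA^{*}$. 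Your two steps are both sound: the reduction $\|A^{*}XB\|=\|(AA^{*})^{1/2}X(BB^{*})^{1/2}\|$ by repeated use of $\|S\|^{2}=\|SS^{*}\|=\|S^{*}S\|$, and the three-lines argument on $F(z)=P_{\varepsilon}^{z}XQ_{\varepsilon}^{1-z}$ with the $\varepsilon$-regularization removed at the end via norm continuity of the square root.

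One correction to your closing assessment: the crux inequality does not resist elementary treatment. The corrected statement is McIntosh's inequality, and it admits a short proof using only the $C^{*}$-identity, submultiplicativity, and $r(ST)=r(TS)$. Indeed, put $Z:=X^{*}AA^{*}X\geq0$; then $B^{*}ZB\geq0$, so its norm equals its spectral radius, and
$$\|A^{*}XB\|^{2}=\|B^{*}ZB\|=r\bigl((B^{*}Z)B\bigr)=r\bigl(B(B^{*}Z)\bigr)\leq\|BB^{*}Z\|=\|ZBB^{*}\|=\bigl\|(X^{*}AA^{*})(XBB^{*})\bigr\|\leq\|AA^{*}X\|\,\|XBB^{*}\|,$$
where the last step uses $\|X^{*}AA^{*}\|=\|AA^{*}X\|$. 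This bypasses your reduction identity, the regularization, and interpolation entirely. What your route buys instead is strictly more: the log-convexity of $r\mapsto\|P^{r}XQ^{1-r}\|$, hence the full family of Heinz-type bounds (including Theorem \ref{thqt:Theorem B} of the paper) rather than just the midpoint case; for unitarily invariant norms, as in \cite{bd1995}, more machinery (singular-value majorization or the interpolation you used) is genuinely needed, but for the operator norm the statement itself is elementary.
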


\begin{thqt}[\cite{heinz}]\label{thqt:Theorem B}
Let $A,B\in \mathcal{B}(\mathcal{H})$ be positive, and
$X\in \mathcal{B}(\mathcal{H})$. Then
$$ \| A^{\alpha}XB^{\alpha}\|\leq
\|AXB\|^{\alpha}\|X\|^{1-\alpha} $$
for all $0\leq \alpha\leq 1$.
\end{thqt}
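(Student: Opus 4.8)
The plan is to prove this by complex interpolation via the Hadamard three-lines theorem. The idea is to embed the operator $A^{\alpha}XB^{\alpha}$ into the analytic family $F(z)=A^{z}XB^{z}$ on the closed strip $\overline{S}=\{z\in\mathbb{C}:0\leq \Re z\leq 1\}$ and to control $\|F(z)\|$ on its two boundary lines, where the two factors $\|AXB\|$ and $\|X\|$ naturally appear. First I would reduce to the case where $A$ and $B$ are positive \emph{invertible}: for $\varepsilon>0$ put $A_{\varepsilon}=A+\varepsilon I$ and $B_{\varepsilon}=B+\varepsilon I$, which are positive invertible. Since $t\mapsto t^{\alpha}$ is continuous, the functional calculus gives $A_{\varepsilon}^{\alpha}\to A^{\alpha}$ and $B_{\varepsilon}^{\alpha}\to B^{\alpha}$ in norm as $\varepsilon\searrow 0$, and likewise $A_{\varepsilon}XB_{\varepsilon}\to AXB$. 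Hence it suffices to prove the inequality for invertible $A,B$ and then pass to the limit $\varepsilon\searrow 0$.

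Assume now $A,B$ are positive invertible. For $z\in\overline{S}$ set $F(z)=A^{z}XB^{z}$, where $A^{z}=\exp(z\log A)$ is defined by the bounded functional calculus; here invertibility is used so that $\log A$ is bounded. Then $F$ is analytic on the interior of $S$ and bounded and continuous on $\overline{S}$. Fixing unit vectors $\xi,\eta\in\mathcal{H}$, the scalar function $f(z)=\langle F(z)\xi\mid\eta\rangle$ is bounded and analytic on the strip. On the left edge $z=iy$ the operators $A^{iy}$ and $B^{iy}$ are unitary, so $|f(iy)|\leq\|A^{iy}XB^{iy}\|=\|X\|$. On the right edge $z=1+iy$, writing $A^{1+iy}=A^{iy}A$ and $B^{1+iy}=BB^{iy}$, one gets $|f(1+iy)|\leq\|A^{iy}(AXB)B^{iy}\|=\|AXB\|$.

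The three-lines theorem applied to $f$ then yields, for every $\alpha\in[0,1]$,
$$|\langle A^{\alpha}XB^{\alpha}\xi\mid\eta\rangle|=|f(\alpha)|\leq \|X\|^{1-\alpha}\|AXB\|^{\alpha}.$$
Taking the supremum over all unit vectors $\xi,\eta$ gives $\|A^{\alpha}XB^{\alpha}\|\leq\|AXB\|^{\alpha}\|X\|^{1-\alpha}$ in the invertible case, and the regularization of the first paragraph completes the proof for arbitrary positive $A,B$.

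The hard part will be the justification of the boundary setup: one must guarantee that $F$ is genuinely bounded on the closed strip, and this is precisely where invertibility of $A$ and $B$ is needed, since it forces $A^{iy},B^{iy}$ to be unitary and hence of unit norm, keeping the vertical translates under control. For non-invertible $A,B$ the operators $A^{iy}$ need not even be bounded, so the $\varepsilon$-regularization is essential rather than cosmetic. A secondary, routine point is the passage from the scalar three-lines estimate to the operator-norm bound, which the device $f(z)=\langle F(z)\xi\mid\eta\rangle$ handles cleanly.
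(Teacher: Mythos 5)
Your proposal is correct. Note first that the paper itself offers no proof of this statement: it is quoted as Theorem~B with a citation to Heinz's 1951 paper, and used as a black box in the proof of Lemma~4.4. So there is no ``paper proof'' to compare against; what you have written is a self-contained proof of the cited result, and it is in fact the standard modern one (the three-lines interpolation argument, in the spirit of McIntosh's treatment of the Heinz inequalities, which is also the context of the reference \cite{W2003} the paper leans on). All the essential steps check out: the $\varepsilon$-regularization is legitimate since $A_{\varepsilon}^{\alpha}\to A^{\alpha}$ in norm by continuous functional calculus (uniform convergence of $t\mapsto(t+\varepsilon)^{\alpha}$ on $[0,\|A\|]$); in the invertible case $z\mapsto A^{z}=\exp(z\log A)$ is entire with $\|A^{z}\|=\|A^{\Re z}\|$ because $A^{i\Im z}$ is unitary, so $F(z)=A^{z}XB^{z}$ is bounded on the closed strip; the boundary estimates $\|F(iy)\|\leq\|X\|$ and $\|F(1+iy)\|\leq\|AXB\|$ are right; and the reduction to scalars via $f(z)=\langle F(z)\xi\mid\eta\rangle$ followed by the supremum over unit vectors is the routine step you identify it as. (In the invertible case there is also no degenerate issue with $M_{1}=\|AXB\|=0$ in the three-lines theorem, since invertibility forces $X=0$ then.)

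One small correction to your closing discussion: for non-invertible positive $A$, the operator $A^{iy}$ defined by bounded Borel functional calculus \emph{is} bounded --- indeed a contraction, since $|t^{iy}|=1$ for $t>0$ --- it is just no longer unitary, and more importantly the map $z\mapsto A^{z}$ fails to be continuous, let alone analytic, up to the boundary line $\Re z=0$, because $t\mapsto t^{z}$ is discontinuous at $t=0$ there. That loss of boundary regularity, rather than unboundedness, is the genuine obstruction, and it is correctly circumvented by your regularization; so the flaw is only in the stated motivation, not in the argument itself.
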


\begin{lemma}\label{lem:Lemma0}
Let ${\bf T}=(T_{1},\ldots,T_{d})\in\mathcal{B}(\mathcal{H})^{d}$ be a commuting $d$-tuple of operators. Then the spherical Aluthge transform
$\widehat{\mathbf{T}}$ is also a commuting $d$-tuple of operators.
\end{lemma}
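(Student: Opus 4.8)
The plan is to reduce the pairwise commutativity of the components of $\widehat{\mathbf{T}}$ to the algebraic criterion already established in Lemma \ref{lem: commuting}. Writing $\widehat{T}_{j}=\sqrt{P}\,V_{j}\sqrt{P}$ for each $j$, I would first compute the product of two components directly, collapsing the adjacent square roots via $\sqrt{P}\,\sqrt{P}=P$:
$$
\widehat{T}_{j}\widehat{T}_{k}
=\big(\sqrt{P}\,V_{j}\sqrt{P}\big)\big(\sqrt{P}\,V_{k}\sqrt{P}\big)
=\sqrt{P}\,V_{j}PV_{k}\,\sqrt{P}.
$$
The analogous computation in the opposite order gives $\widehat{T}_{k}\widehat{T}_{j}=\sqrt{P}\,V_{k}PV_{j}\,\sqrt{P}$, so that the question of whether the two products agree is reduced entirely to comparing the middle factors $V_{j}PV_{k}$ and $V_{k}PV_{j}$.

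At this point I would invoke the hypothesis that $\mathbf{T}$ is commuting together with Lemma \ref{lem: commuting}, which asserts precisely that $V_{j}PV_{k}=V_{k}PV_{j}$ for all $j,k=1,\ldots,d$. Substituting this identity into the two displayed expressions yields $\widehat{T}_{j}\widehat{T}_{k}=\widehat{T}_{k}\widehat{T}_{j}$ for every pair $j,k$, which is exactly the statement that $\widehat{\mathbf{T}}$ is a commuting $d$-tuple.

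I do not expect any genuine obstacle in this argument: once Lemma \ref{lem: commuting} is available, the conclusion follows by a single substitution. The only point warranting a moment of care is the bookkeeping of the square-root factors, namely confirming that the two interior $\sqrt{P}$ terms between adjacent components combine to exactly one copy of $P$; this is immediate from $\sqrt{P}\,\sqrt{P}=P$. In effect the substantive work was carried out in the proof of Lemma \ref{lem: commuting}, and the present lemma is a clean corollary of it.
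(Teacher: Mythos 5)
Your argument is correct and is essentially identical to the paper's own proof: both compute $\widehat{T}_{j}\widehat{T}_{k}=P^{1/2}V_{j}PV_{k}P^{1/2}$ and then apply Lemma \ref{lem: commuting} to swap the middle factors. Nothing further is needed.
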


\begin{proof}
Let $T_{k}=V_{k}P$. Then
$\widehat{\mathbf{T}}=(\widehat{T_1},\ldots,\widehat{T_d})=(P^{\frac{1}{2}}V_{1}P^{\frac{1}{2}},\ldots,
P^{\frac{1}{2}}V_{d}P^{\frac{1}{2}})$. By Lemma \ref{lem: commuting}, we have
$V_{j}PV_{k}=V_{k}PV_{j}$ for all $j,k=1,\ldots,d$.
Hence we have
$$\widehat{T_j}\widehat{T_k}=P^{\frac{1}{2}}V_{j}PV_{k}P^{\frac{1}{2}}=P^{\frac{1}{2}}V_{k}PV_{j}P^{\frac{1}{2}}
=\widehat{T_k}\widehat{T_j}.$$
\end{proof}

\begin{lemma}\label{lem:Lemma 1}
There is an $s\geq r({\bf T})$ for which
$\displaystyle \lim_{n\to \infty}
\|{\bf \hat{T}}_{n}\|=s$.
\end{lemma}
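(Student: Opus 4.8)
The plan is to separate the statement into two independent claims: that the sequence $\bigl(\|\widehat{\mathbf{T}}_{n}\|\bigr)_{n}$ converges, and that its limit dominates $r(\mathbf{T})$. Convergence is immediate from the monotonicity supplied by Theorem \ref{thm:norm decreasing}. Since $\widehat{\mathbf{T}}_{n}=\widehat{\widehat{\mathbf{T}}_{n-1}}$, applying that theorem to the tuple $\widehat{\mathbf{T}}_{n-1}$ gives $\|\widehat{\mathbf{T}}_{n}\|\le \|\widehat{\mathbf{T}}_{n-1}\|$ for every $n\ge 1$. Thus $\bigl(\|\widehat{\mathbf{T}}_{n}\|\bigr)_{n}$ is a non-increasing sequence of non-negative reals and therefore converges to some $s:=\lim_{n\to\infty}\|\widehat{\mathbf{T}}_{n}\|\ge 0$. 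This settles the existence of the limit with no further work.

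For the lower bound $s\ge r(\mathbf{T})$, the two inputs I would assemble are the invariance of the joint spectrum under the spherical Aluthge transform and the elementary estimate $r(\cdot)\le\|\cdot\|$ for commuting tuples. First, by Lemma \ref{lem:Lemma0} commutativity is preserved at every step, so each iterate $\widehat{\mathbf{T}}_{n}$ is a commuting tuple and its joint spectral radius is well defined. Since $r(\cdot)$ is computed from the Taylor joint spectrum, the identity $\sigma_{T}(\widehat{\mathbf{T}})=\sigma_{T}(\mathbf{T})$ recorded in \cite{benhida} yields $r(\widehat{\mathbf{T}})=r(\mathbf{T})$; iterating this along the sequence (which is legitimate precisely because Lemma \ref{lem:Lemma0} keeps each iterate commuting) gives $r(\widehat{\mathbf{T}}_{n})=r(\widehat{\mathbf{T}}_{n-1})=\cdots=r(\mathbf{T})$ for every $n$ by a straightforward induction.

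Next I would note the inequality $r(\mathbf{S})\le\|\mathbf{S}\|$ for any commuting tuple $\mathbf{S}$. This follows from \eqref{jointradii002}: the joint norm is submultiplicative, i.e. $\|\mathbf{S}^{n}\|\le\|\mathbf{S}\|^{n}$, which one checks directly from the definition of the joint norm together with the definition of the product of tuples, so that $r(\mathbf{S})=\lim_{n\to\infty}\|\mathbf{S}^{n}\|^{1/n}\le\|\mathbf{S}\|$. Applying this with $\mathbf{S}=\widehat{\mathbf{T}}_{n}$ and combining with the previous step gives $r(\mathbf{T})=r(\widehat{\mathbf{T}}_{n})\le\|\widehat{\mathbf{T}}_{n}\|$ for every $n$. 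Letting $n\to\infty$ produces $r(\mathbf{T})\le s$, which finishes the proof.

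The routine ingredients are the monotone-convergence argument and the submultiplicativity estimate, neither of which presents any difficulty. The only genuine conceptual input, on which the entire lower bound rests, is the joint-spectrum invariance $\sigma_{T}(\widehat{\mathbf{T}})=\sigma_{T}(\mathbf{T})$ cited from \cite{benhida}; the sole subtlety is ensuring this may be iterated, and that is exactly what Lemma \ref{lem:Lemma0} guarantees by preserving commutativity at each stage.
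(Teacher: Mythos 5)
Your proof is correct and follows essentially the same route as the paper's: monotonicity of the norms from Theorem \ref{thm:norm decreasing} gives convergence, and the lower bound comes from $\|\widehat{\mathbf{T}}_{n}\|\geq r(\widehat{\mathbf{T}}_{n})=r(\mathbf{T})$ with the spectral invariance cited from \cite{benhida}. You merely spell out two steps the paper leaves implicit, namely the submultiplicativity argument giving $r(\cdot)\leq\|\cdot\|$ and the role of Lemma \ref{lem:Lemma0} in justifying the iteration of the spectral identity.
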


\begin{proof}
By Theorem \ref{thm:norm decreasing},
a sequence $\{ \|\widehat{\mathbf{T}}_{n}\|\}_{n=0}^{\infty}$
is decreasing, and
$$ \|\widehat{\mathbf{T}}_{n}\|\geq r(\widehat{\mathbf{T}}_{n})
=r(\mathbf{T})$$
for all non-negative integer $n$,
where the last equation is shown in  \cite{benhida}.
Hence there exists
a limit point $s$ of
 $\{ \|\widehat{\mathbf{T}}_{n}\|\}_{n=0}^{\infty}$ such that
$s\geq r(\mathbf{T})$.
\end{proof}

\begin{lemma}\label{lem:Lemma 3}
For any positive integer $k$
and non-negative integer $n$,
$$ \left\|\widehat{\mathbf{T}}_{n+1}^{k} \right\|\leq
\left\|\widehat{\mathbf{T}}_{n}^{k} \right\|.$$
\end{lemma}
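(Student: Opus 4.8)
The plan is to fix $n$, write $\mathbf{S}:=\widehat{\mathbf{T}}_n$ with its joint polar decomposition $S_j=V_jP$, $P=\bigl(\sum_{j=1}^d S_j^*S_j\bigr)^{1/2}$, and reduce the statement to the single inequality $\|\widehat{\mathbf{S}}^{\,k}\|\le\|\mathbf{S}^k\|$, where $\widehat{\mathbf{S}}=\widehat{\mathbf{T}}_{n+1}=(P^{1/2}V_1P^{1/2},\ldots,P^{1/2}V_dP^{1/2})$. First I would record, for each multi-index $\mathbf{i}=(i_1,\ldots,i_k)$, the factorizations of the entries of the relevant power tuples through $W_{\mathbf{i}}:=V_{i_1}PV_{i_2}P\cdots PV_{i_k}$: the $\mathbf{i}$-entry of $\mathbf{S}^k$ is $S_{i_1}\cdots S_{i_k}=W_{\mathbf{i}}P$, the $\mathbf{i}$-entry of $\widehat{\mathbf{S}}^{\,k}$ is $P^{1/2}W_{\mathbf{i}}P^{1/2}$, and the $\mathbf{i}$-entry of the companion tuple $\mathbf{T}_1:=(PV_1,\ldots,PV_d)$ raised to the $k$-th power is $PW_{\mathbf{i}}$ (this $\mathbf{T}_1$ is the analogue, for the data of $\mathbf{S}$, of the tuple introduced in Theorem \ref{thm:first step on numerical radius}).

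Step one: I would stack the $d^k$ entries of each power tuple into a column operator $\mathcal{H}\to\bigoplus_{\mathbf{i}}\mathcal{H}$. By Lemma \ref{lem:norm of tuple of operators} the joint norm of a tuple equals the operator norm of its column, so it suffices to estimate these columns. Writing the column of $\widehat{\mathbf{S}}^{\,k}$ as $\mathrm{diag}(P^{1/2})\,\mathcal{W}\,P^{1/2}$, where $\mathcal{W}$ is the column of the $W_{\mathbf{i}}$, I would apply Theorem \ref{thqt:Theorem A} with $A=\mathrm{diag}(P^{1/2})$, $X=\mathcal{W}$, $B=P^{1/2}$. Then $A^*A\,\mathcal{W}=\mathrm{diag}(P)\,\mathcal{W}$ is the column of $\mathbf{T}_1^{\,k}$ and $\mathcal{W}BB^*=\mathcal{W}P$ is the column of $\mathbf{S}^k$, so Theorem \ref{thqt:Theorem A} yields $\|\widehat{\mathbf{T}}_{n+1}^{\,k}\|^2\le\|\mathbf{T}_1^{\,k}\|\,\|\widehat{\mathbf{T}}_n^{\,k}\|$.

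Step two, which I expect to be the main obstacle, is the comparison $\|\mathbf{T}_1^{\,k}\|\le\|\mathbf{S}^k\|$, the tuple analogue of the scalar fact $\|(PU)^k\|\le\|(UP)^k\|$. I would establish it by a telescoping recursion. Set $Q_0:=\sum_{j=1}^d V_j^*V_j$, the orthogonal projection onto $\overline{\mathcal{R}(P)}$ (so $Q_0P=P$ and $PQ_0P=P^2$), and $Q_m:=\sum_{j=1}^d V_j^*(PQ_{m-1}P)V_j$ for $m\ge1$. Summing the innermost index first and using repeatedly that a central block $V_i^*(\cdots)V_i$ is collapsed by $\sum_{j}V_j^*(\cdots)V_j$, I expect the closed forms $\sum_{\mathbf{i}}(W_{\mathbf{i}}P)^*(W_{\mathbf{i}}P)=PQ_{k-1}P$ and $\sum_{\mathbf{i}}(PW_{\mathbf{i}})^*(PW_{\mathbf{i}})=Q_k$, that is $\|\mathbf{S}^k\|^2=\|PQ_{k-1}P\|$ and $\|\mathbf{T}_1^{\,k}\|^2=\|Q_k\|$; these two identities would be proved by induction on $k$. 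Finally, Lemma \ref{lem:Lemma 2} applied to $Q_k=\sum_j V_j^*(PQ_{k-1}P)V_j$ gives $\|Q_k\|\le\|\sum_j V_j^*V_j\|\,\|PQ_{k-1}P\|\le\|PQ_{k-1}P\|$, hence $\|\mathbf{T}_1^{\,k}\|\le\|\mathbf{S}^k\|$, and combining with Step one yields $\|\widehat{\mathbf{T}}_{n+1}^{\,k}\|^2\le\|\widehat{\mathbf{T}}_n^{\,k}\|^2$. The delicate points will be the bookkeeping of the non-commuting products $W_{\mathbf{i}}$ across the $d^k$ multi-indices when passing to column operators, and the careful verification of the two closed forms; notably, commutativity of the tuple is not needed anywhere in this argument.
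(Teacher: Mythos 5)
Your proposal is correct and follows essentially the same route as the paper: reduce to a single iteration, realize the column of $\widehat{\mathbf{T}}^{k}$ as $A_k^{1/2}X_kA_k^{1/2}$ for diagonal/column operator matrices built from $P$ and the words $V_{i_1}P\cdots PV_{i_k}$, apply the Bhatia--Davis inequality (Theorem \ref{thqt:Theorem A}), and compare the two resulting factors via Lemma \ref{lem:Lemma 2} together with the fact that $\sum_{j}V_j^*V_j$ is the projection onto $\overline{\mathcal{R}(P)}$. Your recursion $Q_m$ is just a more explicit bookkeeping of the paper's single insertion of that projection inside $P^2$, so the two arguments coincide in substance.
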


\begin{proof}
Since $\widehat{\mathbf{T}}_{n+1}=\widehat{\widehat{\mathbf{T}}_{n}}$, we only prove $\left\|\widehat{\mathbf{T}}^{k}\right\|\leq \left\|\mathbf{T}^{k}\right\|$.
We notice that by Lemma
\ref{lem:norm of tuple of operators},
$\|\mathbf{T}^{k}\|$ is given as follows.
\begin{equation*}
\left\|\mathbf{T}^{k}\right\|^{2}=
\left\| \sum_{i_{1},\cdots,i_{k}=1}^{d}
T_{i_{1}}^{*}\cdots T_{i_{k}}^{*}
T_{i_{k}}\cdots T_{i_{1}}\right\|.
\end{equation*}
Let $A_{k}:={\text{diag}}(P,\ldots,P)$ be a $d^{k}$--by--$d^{k}$ operator matrix, and let
$$ X_{k}=\begin{pmatrix}
V_{1}P\cdots PV_{1} & 0 & \cdots & 0 \\
\vdots & \vdots & & \vdots \\
V_{d}P\cdots PV_{d} & 0 & \cdots & 0
\end{pmatrix}$$
be a $d^{k}$--by--$d^{k}$ operator matrix,
where the $1$st column contains
$V_{i_{1}}PV_{i_{2}}P\cdots PV_{i_{k}}$ for
all $i_{1},\ldots,i_{k}=1,2,\ldots,d$.
Then by Theorem \ref{thqt:Theorem A},
\begin{align}
\left\|\widehat{\mathbf{T}}^{k}\right\|^{2}
& =
\left\| \sum_{i_{1},\ldots,i_{k}=1}^{d}
\widehat{T_{i_1}}^{*}\cdots \widehat{T_{i_k}}^{*}
\widehat{T_{i_k}}\cdots \widehat{T_{i_1}}
\right\| \nonumber\\
& =
\left\| \sum_{i_{1},\ldots,i_{k}=1}^{d}
P^{\frac{1}{2}}V_{i_{1}}^{*}P\cdots PV_{i_{k}}^{*}
PV_{i_{k}}P\cdots PV_{i_{1}}P^{\frac{1}{2}}\right\|
\nonumber \\
& =
\left\| A_{k}^{\frac{1}{2}}X_{k}^{*}A_{k}X_{k}
A_{k}^{\frac{1}{2}}\right\|
 =
\left\| A_{k}^{\frac{1}{2}}X_{k}
A_{k}^{\frac{1}{2}}\right\|^{2}
\leq
\|A_{k}X_{k}\| \|X_{k}A_{k}\|.
\label{eq: (**)}
\end{align}
Now, it can be seen that
\begin{align}
\|A_{k}X_{k}\| & =
\|X_{k}^{*}A_{k}^{2}X_{k}
\|^{\frac{1}{2}} \nonumber\\
& =
\left\|
\sum_{i_{1},\ldots,i_{k}=1}^{d}
V_{i_{1}}^{*}P\cdots PV_{i_{k}}^{*}
P^{2} V_{i_{k}}P\cdots PV_{i_{1}}
\right\|^{\frac{1}{2}}
\nonumber\\
& =
\left\|
\sum_{i_{1},\ldots,i_{k}=1}^{d}
V_{i_{1}}^{*}P\cdots PV_{i_{k}}^{*}
P\left( \sum_{i_{k+1}=1}^{d}V_{i_{k+1}}^{*}V_{i_{k+1}}
\right)P V_{i_{k}}P\cdots PV_{i_{1}}
\right\|^{\frac{1}{2}}
\nonumber\\
& =
\left\|
\sum_{i_{1}=1}^{d} V_{i_{1}}^{*}
\left(
\sum_{i_{2},\ldots,i_{k+1}=1}^{d}
PV_{i_{2}}^{*}P\cdots PV_{i_{k}}^{*}
PV_{i_{k+1}}^{*}V_{i_{k+1}} P V_{i_{k}}
P\cdots PV_{i_{2}} P\right) V_{i_{1}}
\right\|^{\frac{1}{2}}
\nonumber\\
& =
\left\|
\sum_{i_{1}=1}^{d} V_{i_{1}}^{*}
\left(
\sum_{i_{2},\ldots,i_{k+1}=1}^{d}
T_{i_{2}}^{*}\cdots T_{i_{k}}^{*}
T_{i_{k+1}}^{*}T_{i_{k+1}} \cdots T_{i_{2}} \right)
V_{i_{1}}
\right\|^{\frac{1}{2}}
\nonumber\\
& \leq
\left\| \sum_{i_{1}=1}^{d}V_{i_1}^{*}V_{i_1}
\right\|^{\frac{1}{2}}
\left\| \sum_{i_{2},\ldots,i_{k+1}=1}^{d}
T_{i_{2}}^{*}\cdots T_{i_{k+1}}^{*}
T_{i_{k+1}}\cdots T_{i_{2}}\right\|^{\frac{1}{2}}
=\|\mathbf{T}^{k}\|,\label{eq:starstar}
\end{align}
where the last inequality follows from Lemma \ref{lem:Lemma 2} and the fact that $\sum_{k=1}^{d}V_{k}^{*}V_{k}$ is a projection
onto $\overline{\mathcal{R}(P)}$. Moreover
\begin{align*}
\|X_{k}A_{k}\| & =
\|A_{k}X_{k}^{*}X_{k}A_{k}\|^{\frac{1}{2}} \\
& =
\left\| \sum_{i_{1},\ldots,i_{k}=1}^{d}
PV_{i_1}^{*}P\cdots PV_{i_{k}}^{*}
V_{i_{k}}P\cdots PV_{i_1}P\right\|^{\frac{1}{2}} \\
& =
\left\| \sum_{i_{1},\ldots,i_{k}=1}^{d}
T_{i_{1}}^{*}\cdots T_{i_{k}}^{*}
T_{i_{k}}\cdots T_{i_{1}}\right\|^{\frac{1}{2}} =
\|\mathbf{T}^{k}\|.
\end{align*}
Hence we have
$$\left\|\widehat{\mathbf{T}}^{k}\right\|\leq
\|A_{k}X_{k}\|^{\frac{1}{2}} \|X_{k}A_{k}\|^{\frac{1}{2}}
\leq \left\|\mathbf{T}^{k}\right\|.$$
\end{proof}

\begin{lemma}\label{lem:Lemma 4}
For any positive integer $k$,
$$ \left\|\widehat{\mathbf{T}}_{n+1}^{k}\right\|\leq
\left\|\widehat{\mathbf{T}}_{n}^{k+1}\right\|^{\frac{1}{2}}
\left\|\widehat{\mathbf{T}}_{n}^{k-1}\right\|^{\frac{1}{2}} $$
for all $n\geq 0$.
\end{lemma}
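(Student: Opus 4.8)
The plan is to mirror Yamazaki's single–operator argument, but with the norm estimate of Lemma \ref{lem:Lemma 3} (which rested on Theorem \ref{thqt:Theorem A}) replaced by the Heinz inequality, Theorem \ref{thqt:Theorem B}. Since $\widehat{\mathbf{T}}_{n+1}=\widehat{\widehat{\mathbf{T}}_{n}}$, it suffices to prove the single–step inequality $\|\widehat{\mathbf{T}}^{k}\|\leq \|\mathbf{T}^{k+1}\|^{\frac{1}{2}}\|\mathbf{T}^{k-1}\|^{\frac{1}{2}}$ and then to apply it with $\mathbf{T}$ replaced by $\widehat{\mathbf{T}}_{n}$. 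I will reuse the $d^{k}$-by-$d^{k}$ operator matrices $A_{k}=\mathrm{diag}(P,\ldots,P)$ and $X_{k}$ (whose first column lists the entries $V_{i_{1}}P\cdots PV_{i_{k}}$) from the proof of Lemma \ref{lem:Lemma 3}, together with the identity $\|\widehat{\mathbf{T}}^{k}\|=\|A_{k}^{\frac{1}{2}}X_{k}A_{k}^{\frac{1}{2}}\|$ recorded there. Applying Theorem \ref{thqt:Theorem B} with $\alpha=\tfrac{1}{2}$, $A=B=A_{k}\geq0$ and $X=X_{k}$ yields
\[
\|\widehat{\mathbf{T}}^{k}\|=\|A_{k}^{\frac{1}{2}}X_{k}A_{k}^{\frac{1}{2}}\|\leq \|A_{k}X_{k}A_{k}\|^{\frac{1}{2}}\,\|X_{k}\|^{\frac{1}{2}},
\]
so everything reduces to the two norm relations $\|A_{k}X_{k}A_{k}\|=\|\mathbf{T}^{k+1}\|$ and $\|X_{k}\|\leq\|\mathbf{T}^{k-1}\|$.

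For the first relation I would compute $\|A_{k}X_{k}A_{k}\|^{2}=\|(A_{k}X_{k}A_{k})^{*}(A_{k}X_{k}A_{k})\|$. As $A_{k}X_{k}A_{k}$ has only its first column nonzero, with entries $PV_{i_{1}}P\cdots PV_{i_{k}}P$, this equals $\big\|\sum_{i_{1},\ldots,i_{k}}(PV_{i_{1}}P\cdots PV_{i_{k}}P)^{*}(PV_{i_{1}}P\cdots PV_{i_{k}}P)\big\|$, and the two middle factors meet in a central block $P^{2}$. The key device is to write $P^{2}=P\big(\sum_{i_{0}}V_{i_{0}}^{*}V_{i_{0}}\big)P$, which is valid since $\sum_{i}V_{i}^{*}V_{i}$ is the orthogonal projection onto $\overline{\mathcal{R}(P)}$; inserting this and regrouping via $T_{j}=V_{j}P$, $T_{j}^{*}=PV_{j}^{*}$, each summand becomes $T_{j_{1}}^{*}\cdots T_{j_{k+1}}^{*}T_{j_{k+1}}\cdots T_{j_{1}}$ under the relabeling $(i_{k},\ldots,i_{1},i_{0})\mapsto(j_{1},\ldots,j_{k},j_{k+1})$. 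By Lemma \ref{lem:norm of tuple of operators} this gives $\|A_{k}X_{k}A_{k}\|^{2}=\big\|\sum T_{j_{1}}^{*}\cdots T_{j_{k+1}}^{*}T_{j_{k+1}}\cdots T_{j_{1}}\big\|=\|\mathbf{T}^{k+1}\|^{2}$, an exact equality.

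For the second relation I would start from $\|X_{k}\|^{2}=\|X_{k}^{*}X_{k}\|=\big\|\sum_{i_{1},\ldots,i_{k}}(V_{i_{1}}P\cdots PV_{i_{k}})^{*}(V_{i_{1}}P\cdots PV_{i_{k}})\big\|$. Summing first over the innermost index $i_{1}$ replaces the central factor $V_{i_{1}}^{*}V_{i_{1}}$ by the projection $\sum_{i_{1}}V_{i_{1}}^{*}V_{i_{1}}$, whence a central $P^{2}$ appears via $P\big(\sum_{i_{1}}V_{i_{1}}^{*}V_{i_{1}}\big)P=P^{2}$. Then, exactly as in the estimate \eqref{eq:starstar}, I factor out the outermost pair $V_{i_{k}}^{*}(\,\cdot\,)V_{i_{k}}$ and invoke Lemma \ref{lem:Lemma 2} to bound the norm by $\big\|\sum_{i_{k}}V_{i_{k}}^{*}V_{i_{k}}\big\|\,\|B\|=\|B\|$, where $B=\sum_{i_{2},\ldots,i_{k-1}}PV_{i_{k-1}}^{*}P\cdots PV_{i_{2}}^{*}P^{2}V_{i_{2}}P\cdots PV_{i_{k-1}}P$. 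Substituting $P^{2}=P\big(\sum_{i}V_{i}^{*}V_{i}\big)P$ once more inside $B$ and regrouping identifies $B=\sum T_{j_{1}}^{*}\cdots T_{j_{k-1}}^{*}T_{j_{k-1}}\cdots T_{j_{1}}$, so that $\|X_{k}\|^{2}\leq\|B\|=\|\mathbf{T}^{k-1}\|^{2}$. For $k=1$ this degenerates to $\|X_{1}\|^{2}=\|\sum_{i}V_{i}^{*}V_{i}\|=1=\|\mathbf{T}^{0}\|^{2}$, with the convention $\mathbf{T}^{0}=(I)$. Combining the three displays gives the single-step inequality, and hence the lemma.

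The main obstacle is not any single analytic step but the multi-index bookkeeping: one must place the substitution $P^{2}=P\big(\sum_{i}V_{i}^{*}V_{i}\big)P$ at the correct central block and track the relabelings carefully, while also noting the asymmetry between the two relations — $\|A_{k}X_{k}A_{k}\|=\|\mathbf{T}^{k+1}\|$ is an equality (the projection identity is applied as an exact substitution), whereas $\|X_{k}\|\leq\|\mathbf{T}^{k-1}\|$ is only an inequality (Lemma \ref{lem:Lemma 2} is used to discard the outer partial isometries). A minor point to check is that Theorem \ref{thqt:Theorem B} is being applied on the direct sum $\bigoplus^{d^{k}}\mathcal{H}$, which is harmless since $A_{k}\geq0$ there.
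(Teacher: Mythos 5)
Your proposal is correct and follows essentially the same route as the paper: the reduction to the single-step inequality, the matrices $A_k$, $X_k$ with $\|\widehat{\mathbf{T}}^{k}\|=\|A_k^{1/2}X_kA_k^{1/2}\|$, the Heinz inequality (Theorem \ref{thqt:Theorem B}) with $\alpha=\tfrac12$, and the projection insertion $P^2=P\bigl(\sum_i V_i^*V_i\bigr)P$ to identify $\|A_kX_kA_k\|=\|\mathbf{T}^{k+1}\|$ and bound $\|X_k\|\leq\|\mathbf{T}^{k-1}\|$. The only cosmetic difference is that the paper obtains the latter bound by recognizing $X_k^*X_k$ (after collapsing the central index) as $X_{k-1}^*A_{k-1}^2X_{k-1}$ and citing the estimate \eqref{eq:starstar}, whereas you rerun that computation inline; your explicit treatment of the $k=1$ case via $\mathbf{T}^0=(I)$ is a small bonus.
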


\begin{proof}
We shall prove
$ \|\widehat{\mathbf{T}}^{k}\|\leq
\|\mathbf{T}^{k+1}\|^{\frac{1}{2}}
\|\mathbf{T}^{k-1}\|^{\frac{1}{2}}. $
Let $A_{k}$ and $X_{k}$ be defined in the proof of
Lemma \ref{lem:Lemma 3}.
Then, by \eqref{eq: (**)} and
Theorem \ref{thqt:Theorem B}, we have
$$ \left\|\widehat{\mathbf{T}}^{k}\right\|=
\left\|A_{k}^{\frac{1}{2}}X_{k}A^{\frac{1}{2}}_{k}\right\|\leq
\|A_{k}X_{k}A_{k}\|^{\frac{1}{2}}\|X_{k}\|^{\frac{1}{2}}. $$
By taking into consideration the fact that $\sum_{k=1}^{d}V_{k}^{*}V_{k}$ is an orthogonal projection
onto $\overline{\mathcal{R}(P)}$, it can be observed that
\begin{align*}
\|A_{k}X_{k}A_{k}\|
& =
\left\| \sum_{i_1,\ldots,i_k=1}^{d}
P V_{i_1}^{*}P\cdots PV_{i_k}^{*}P^{2}
V_{i_k}P\cdots P V_{i_1}P\right\|^{\frac{1}{2}} \\
& =
\left\| \sum_{i_1,\ldots,i_k=1}^{d}
P V_{i_1}^{*}P\cdots PV_{i_k}^{*}P
\left(\sum_{i_{k+1}=1}^{d}V_{i_{k+1}}^{*}V_{i_{k+1}}\right)
PV_{i_k}P\cdots P V_{i_1}P\right\|^{\frac{1}{2}} \\
& =
\left\| \sum_{i_1,\ldots,i_{k+1}=1}^{d}
P V_{i_1}^{*}P\cdots PV_{i_k}^{*}P
V_{i_{k+1}}^{*}V_{i_{k+1}}
PV_{i_k}P\cdots P V_{i_1}P\right\|^{\frac{1}{2}} \\
& =
\left\| \sum_{i_1,\ldots,i_{k+1}=1}^{d}
T_{i_1}^{*}\cdots T_{i_{k+1}}^{*}T_{i_{k+1}}
\cdots T_{i_1}\right\|^{\frac{1}{2}}
 =\left\|\mathbf{T}^{k+1}\right\|.
\end{align*}
On the other hand, one has
\begin{align*}
\|X_{k}\|
& =
\left\| \sum_{i_{1},\ldots,i_{k}=1}^{d}
V_{i_{1}}^{*}P\cdots PV_{i_{k}}^{*}
V_{i_{k}}P\cdots PV_{i_{1}}\right\|^{\frac{1}{2}}\\
& =
\left\| \sum_{i_{1},\ldots,i_{k-1}=1}^{d}
V_{i_{1}}^{*}P\cdots P
\left(\sum_{i_{k}=1}^{d}
V_{i_{k}}^{*}V_{i_{k}}\right)
P\cdots PV_{i_{1}}\right\|^{\frac{1}{2}}\\
& =
\left\| \sum_{i_{1},\ldots,i_{k-1}=1}^{d}
V_{i_{1}}^{*}P\cdots V_{i_{k-1}}^{*}P^{2}V_{i_{k-1}}
\cdots PV_{i_{1}}\right\|^{\frac{1}{2}}\\
& =
\left\| X_{k-1}^{*}A_{k-1}^{2}X_{k-1}
\right\|^{\frac{1}{2}}
\leq \left\|\mathbf{T}^{k-1}\right\|,
\end{align*}
where the last inequality follows from
\eqref{eq:starstar}.
Therefore
$$ \|\widehat{\mathbf{T}}^{k}\|\leq
\|A_{k}X_{k}A_{k}\|^{\frac{1}{2}}
\|X_{k}\|^{\frac{1}{2}}
\leq \|\mathbf{T}^{k+1}\|^{\frac{1}{2}}
\|\mathbf{T}^{k-1}\|^{\frac{1}{2}}.$$
\end{proof}

\begin{lemma}\label{lem:Lemma 5}
For each positive integer $k$,
$\|\mathbf{T}^{k+1}\|\leq
\|\mathbf{T}^{k}\| \|\mathbf{T}\|. $
\end{lemma}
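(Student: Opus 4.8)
The plan is to reduce the inequality to a direct application of Lemma~\ref{lem:Lemma 2}, in the same spirit as the block-matrix estimates used in the proofs of Lemmas~\ref{lem:Lemma 3} and~\ref{lem:Lemma 4}. First I would record, via Lemma~\ref{lem:norm of tuple of operators}, the explicit expression for the joint norm of an iterated product:
\begin{equation*}
\|\mathbf{T}^{k}\|^{2}=\left\|\sum_{i_{1},\ldots,i_{k}=1}^{d} T_{i_{1}}^{*}\cdots T_{i_{k}}^{*} T_{i_{k}}\cdots T_{i_{1}}\right\|,\qquad \|\mathbf{T}^{k+1}\|^{2}=\left\|\sum_{i_{1},\ldots,i_{k+1}=1}^{d} T_{i_{1}}^{*}\cdots T_{i_{k+1}}^{*} T_{i_{k+1}}\cdots T_{i_{1}}\right\|.
\end{equation*}
The goal then becomes the single operator-norm inequality $\|\mathbf{T}^{k+1}\|^{2}\le \|\mathbf{T}^{k}\|^{2}\,\|\mathbf{T}\|^{2}$, after which taking square roots finishes the proof.

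Next I would isolate the outermost summation index. Writing $W:=T_{i_{k}}\cdots T_{i_{1}}$ for the length-$k$ word indexed by $(i_{1},\ldots,i_{k})$, the factor $T_{i_{k+1}}\cdots T_{i_{1}}$ equals $T_{i_{k+1}}W$, so that the summand $T_{i_{1}}^{*}\cdots T_{i_{k+1}}^{*} T_{i_{k+1}}\cdots T_{i_{1}}$ is precisely $W^{*}T_{i_{k+1}}^{*}T_{i_{k+1}}W$. Summing first over the innermost index $i_{k+1}=j$ therefore gives
\begin{equation*}
\sum_{i_{1},\ldots,i_{k+1}=1}^{d} T_{i_{1}}^{*}\cdots T_{i_{k+1}}^{*} T_{i_{k+1}}\cdots T_{i_{1}} = \sum_{W} W^{*}\left(\sum_{j=1}^{d} T_{j}^{*}T_{j}\right)W,
\end{equation*}
where the inner sum over $j$ factors out because it does not involve $W$. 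Here the family $\{W\}$ is indexed by the $d^{k}$ multi-indices $(i_{1},\ldots,i_{k})$, the operator $\sum_{W}W^{*}W$ is exactly the one appearing in $\|\mathbf{T}^{k}\|^{2}$, and $\big\|\sum_{j}T_{j}^{*}T_{j}\big\|=\|\mathbf{T}\|^{2}$ again by Lemma~\ref{lem:norm of tuple of operators}.

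Finally I would invoke Lemma~\ref{lem:Lemma 2}, whose block-matrix proof is valid verbatim for an arbitrary finite family of operators and not merely for $d$ of them, applied with the operators $X$ taken to be the words $W$ and with $A:=\sum_{j=1}^{d}T_{j}^{*}T_{j}$. This yields
\begin{equation*}
\left\|\sum_{W}W^{*}\left(\sum_{j=1}^{d}T_{j}^{*}T_{j}\right)W\right\|\le \left\|\sum_{W}W^{*}W\right\|\left\|\sum_{j=1}^{d}T_{j}^{*}T_{j}\right\| = \|\mathbf{T}^{k}\|^{2}\,\|\mathbf{T}\|^{2},
\end{equation*}
which is the desired bound. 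The only point requiring genuine care is the bookkeeping of the multi-indices: one must recognize that peeling off the innermost block $T_{i_{k+1}}^{*}T_{i_{k+1}}$ and summing over $j$ produces exactly the middle factor $\sum_{j}T_{j}^{*}T_{j}$ of norm $\|\mathbf{T}\|^{2}$, so that Lemma~\ref{lem:Lemma 2} applies cleanly. I would also note that no commutativity hypothesis enters anywhere, so the estimate holds for an arbitrary tuple $\mathbf{T}$.
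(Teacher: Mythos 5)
Your proof is correct. It rests on the same key tool as the paper --- Lemma~\ref{lem:Lemma 2} applied after regrouping the sum that represents $\|\mathbf{T}^{k+1}\|^{2}$ via Lemma~\ref{lem:norm of tuple of operators} --- but uses the mirror-image decomposition. The paper peels off the \emph{outermost} index $i_{1}$, writing the sum as $\sum_{i_{1}}T_{i_{1}}^{*}\bigl(\sum_{i_{2},\ldots,i_{k+1}}T_{i_{2}}^{*}\cdots T_{i_{k+1}}^{*}T_{i_{k+1}}\cdots T_{i_{2}}\bigr)T_{i_{1}}$ and applying Lemma~\ref{lem:Lemma 2} with the $d$ operators $T_{i_{1}}$ as the $X$'s and the inner sum (of norm $\|\mathbf{T}^{k}\|^{2}$) as $A$; you instead peel off the \emph{innermost} index $i_{k+1}$, taking the $d^{k}$ words $W=T_{i_{k}}\cdots T_{i_{1}}$ as the $X$'s and $A=\sum_{j}T_{j}^{*}T_{j}$. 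Both regroupings are legitimate and produce the identical bound $\|\mathbf{T}^{k}\|^{2}\,\|\mathbf{T}\|^{2}$. The paper's choice is marginally more economical, since Lemma~\ref{lem:Lemma 2} is invoked exactly as stated with $d$ operators, whereas yours requires the (correctly flagged and entirely trivial) observation that the lemma holds for an arbitrary finite family; your version has the mild advantage that the middle factor is literally $\sum_{j}T_{j}^{*}T_{j}$ rather than a re-indexed length-$k$ sum. You are also right that no commutativity hypothesis enters either argument.
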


\begin{proof}
\begin{align*}
\|\mathbf{T}^{k+1}\|^{2}
& =
\left\| \sum_{i_{1},\ldots,i_{k+1}=1}^{d}
T_{i_1}^{*}\cdots T_{i_{k+1}}^{*}
T_{i_{k+1}}\cdots T_{i_1}\right\| \\
& =
\left\| \sum_{i_{1}=1}^{d}T_{i_1}^{*}\left(
\sum_{i_{2},\ldots,i_{k+1}=1}^{d}
T_{i_2}^{*}\cdots T_{i_{k+1}}^{*}
T_{i_{k+1}}\cdots T_{i_2}\right)T_{i_1}\right\| \\
& \leq
\left\| \sum_{i_{1}=1}^{d} T_{i_{1}}^{*}T_{i_{1}}\right\|
\left\| \sum_{i_{2},\ldots,i_{k+1}=1}^{d}
T_{i_2}^{*}\cdots T_{i_{k+1}}^{*}
T_{i_{k+1}}\cdots T_{i_2}\right\|
\quad\text{(by Lemma \ref{lem:Lemma 2})}\\
& =
\|\mathbf{T}\|^{2}\|\mathbf{T}^{k}\|^{2}.
\end{align*}
\end{proof}

\begin{lemma}\label{lem:Lemma 6}
For any positive integer $k$,
$\displaystyle \lim_{n\to\infty}
\|{\bf \hat{T}}_{n}^{k}\|=s^{k}$.
\end{lemma}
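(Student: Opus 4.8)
The plan is to first establish that the limit $L_k := \lim_{n\to\infty}\|\widehat{\mathbf{T}}_n^k\|$ exists for each fixed positive integer $k$, and then to pin it down as $s^k$ by squeezing it between two inequalities coming directly from the preceding lemmas. Existence is immediate from Lemma \ref{lem:Lemma 3}: for fixed $k$ the sequence $\{\|\widehat{\mathbf{T}}_n^k\|\}_{n\ge 0}$ is non-increasing and bounded below by $0$, hence convergent. I would record the two boundary values $L_1 = s$ (since $\widehat{\mathbf{T}}_n^1 = \widehat{\mathbf{T}}_n$ together with Lemma \ref{lem:Lemma 1}) and $L_0 = 1$, under the convention that $\mathbf{T}^0 = (I)$, so that $\|\mathbf{T}^0\|=1$ and the $k=1$ instance of Lemma \ref{lem:Lemma 4} is meaningful.

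Next I would obtain the upper bound $L_k \le s^k$. Applying Lemma \ref{lem:Lemma 5} to the tuple $\widehat{\mathbf{T}}_n$ gives $\|\widehat{\mathbf{T}}_n^{k+1}\| \le \|\widehat{\mathbf{T}}_n^k\|\,\|\widehat{\mathbf{T}}_n\|$; letting $n\to\infty$ yields $L_{k+1} \le s\,L_k$, and a one-line induction starting from $L_1 = s$ gives $L_k \le s^k$ for all $k\ge 1$. If $s=0$ this already forces $L_k = 0 = s^k$, disposing of the degenerate case.

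For the matching lower bound I would exploit the log-convexity hidden in Lemma \ref{lem:Lemma 4}. Passing to the limit $n\to\infty$ there gives $L_k \le L_{k+1}^{1/2}L_{k-1}^{1/2}$, that is $L_k^2 \le L_{k-1}L_{k+1}$. Assuming $s>0$ and normalizing by $a_k := L_k/s^k$, this reads $a_k^2 \le a_{k-1}a_{k+1}$, while the upper bound reads $a_k \le 1$ and the boundary data read $a_0 = a_1 = 1$. A short induction then closes everything: if $a_{k-1} = a_k = 1$, then $1 = a_k^2 \le a_{k-1}a_{k+1} = a_{k+1}$, forcing $a_{k+1}=1$; hence $a_k = 1$, i.e. $L_k = s^k$, for every $k$.

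The main point, and the only step that is not mechanical, is recognizing that Lemma \ref{lem:Lemma 4} is exactly a log-convexity statement for the limiting sequence $(L_k)$, so that equality is forced by the two endpoint values $L_0 = 1$, $L_1 = s$ together with the ceiling $L_k \le s^k$. The remaining care is pure bookkeeping: the interchange of limits is automatic because every sequence involved converges, and the $s=0$ case is absorbed by the upper bound.
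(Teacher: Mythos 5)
Your proof is correct and follows essentially the same route as the paper: both arguments pass to the limit in Lemma \ref{lem:Lemma 4} and Lemma \ref{lem:Lemma 5} and then run an induction on $k$ that squeezes $L_{k+1}$ between $s^{k+1}$ from both sides. Your repackaging via the normalized sequence $a_k = L_k/s^k$ and the explicit treatment of the case $s=0$ (which the paper glosses over) are only cosmetic improvements on the same argument.
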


\begin{proof}
We will prove the lemma by induction.
Since $\displaystyle \lim_{n\to\infty}
\|{\bf \hat{T}}_{n}\|=s$ by Lemma \ref{lem:Lemma 1},
the lemma is proven for $k=1$.
Assume the lemma is proven for $1\leq k\leq m$.
By Lemmas \ref{lem:Lemma 4} and
\ref{lem:Lemma 5},
\begin{equation}
\begin{split}
\|\widehat{\mathbf{T}}_{n+1}^{k}\|
& \leq
\|\widehat{\mathbf{T}}_{n}^{k+1}\|^{\frac{1}{2}}
\|\widehat{\mathbf{T}}_{n}^{k-1}\|^{\frac{1}{2}}\\
& \leq
\|\widehat{\mathbf{T}}_{n}^{k}\|^{\frac{1}{2}}
\|\widehat{\mathbf{T}}_{n}\|^{\frac{1}{2}}
\|\widehat{\mathbf{T}}_{n}^{k-1}\|^{\frac{1}{2}}.
\end{split}
\label{eq:(*3)}
\end{equation}
Let $\displaystyle t:=\lim_{n\to\infty}
\|\widehat{\mathbf{T}}_{n}^{m+1}\|$.
The existence of limit
follows from Lemma \ref{lem:Lemma 3}.
Taking limits, the induction hypothesis and
\eqref{eq:(*3)} show that
$$ s^{m}\leq t^{\frac{1}{2}}s^{\frac{m-1}{2}}
\leq s^{\frac{m}{2}}s^{\frac{1}{2}}s^{\frac{m-1}{2}}=
s^{m}. $$
It follows that $t=s^{m+1}$, and the proof is
completed.
\end{proof}

\begin{proof}[Proof of Theorem \ref{th:spectral radius}]
It follows from Lemmas \ref{lem:Lemma 3} and
\ref{lem:Lemma 6} that, for each positive
integer $k$, the decreasing sequence
$\{ \|\widehat{\mathbf{T}}_{n}^{k}\|^{\frac{1}{k}}\}_{n=0}^{\infty}$
converges to $s$. Therefore
\begin{equation}
s\leq \| \widehat{\mathbf{T}}_{n}^{k}\|^{\frac{1}{k}}
\label{eq:(*4)}
\end{equation}
for all $n$ and $k$. Now fix an $n$.
If $r({\bf T})<s$, then by
Lemma \ref{lem:Lemma0} and \eqref{jointradii002},
$$ \lim_{k\to\infty}\|\mathbf{T}^{k}\|^{\frac{1}{k}}
=r(\widehat{\mathbf{T}_{n}})=r({\bf T}) $$
would imply that
$\displaystyle \|\widehat{\mathbf{T}}_{n}^{k}
\|^{\frac{1}{k}}<s$
for sufficiently large $k$. Clearly this is a
contradiction to \eqref{eq:(*4)}.
Therefore, we must have $s=r({\bf T})$, and the
result follows from Lemma \ref{lem:Lemma 1}.
\end{proof}

\begin{remark}
For a $d$-tuple of operators $\mathbf{T}$
and a natural number $n$,
$\mathbf{T}^{n}$ is a $d^{n}$-tuple of operators.
Then we should consider
$d^{n}$-tuple of operators for $n=1,2,\ldots$
to use \eqref{jointradii002}.
However, since $\widehat{\mathbf{T}}_{n}$ is also a $d$-tuple
of operators, we only treat
$d$-tuple of operators to get $r(\mathbf{T})$ by
Theorem \ref{th:spectral radius}.
\end{remark}


\end{document}